\newtheorem{theorem}{Theorem}[]
\newtheorem{lemma}{Lemma}[section]
\newtheorem{definition}{Definition}
\theoremstyle{remark}
\newtheorem{remark}[lemma]{Remark}
\newtheorem{example}{Example}
\def\ssum{\mathop{\sum\!\sum}}
\newcommand{\sumstar}{\sideset{}{^{*}}\sum}
\newcommand{\prodstar}{\sideset{}{^{*}}\prod}
\newcommand{\Rmnum}[1]{\expandafter\@slowromancap\romannumeral #1@}
\def\P{\partial}
\def\li{{\rm Li}}
\def\qb{\mathbb Q}
\def\rb{\mathbb R}
\def\nb{\mathbb N}
\def\zb{\mathbb Z}
\def\cb{{\mathbb C}}
\def\sh{{\mathscr H}}
\def\cf{{\mathcal F}}
\def\rc{{\mathcal R}}
\def\cs{{\mathcal S}}
\def\ch{{\mathcal H}}
\def\cm{{\mathcal M}}
\def\ss{{\mathscr S}}
\def\rrw{\rightarrow}
\numberwithin{equation}{section}
\begin{document}

\title{Asymptotic formulae for Eulerian series}
\author{Nian Hong Zhou}
\address{Department of Mathematics,
East China Normal University,
500 Dongchuan Road, Shanghai 200241, PR China}
\email{nianhongzhou@outlook.com}
\keywords{Eulerian series, basic hypergeometric series, asymptotics, mock theta functions.}
\subjclass[2010]{Primary: 11P82; Secondary: 11F27, 33D15, 41A58.}
\thanks{}
\date{}

\dedicatory{}

\begin{abstract} Let $(a;q)_{\infty}$ be the $q$-Pochhammer symbol and $\li_2(x)$ be the dilogarithm function. Let $\prod_{\alpha,\beta,\gamma}$ be a finite product with every triple $(\alpha,\beta,\gamma)\in(\rb_{>0})^3$ and $S_{\alpha\beta\gamma}\in\rb$.  Also let the triple $(A,B,v)\in\left(\rb_{>0}\times\rb^2\right)\cup\left(\{0\}^2\times\rb_{>0}\right)\cup\left(\{0\}\times\rb_{<0}\times\rb\right)$. In this work, we let $z=e^v$, denote by $H_{-1}(u)=vu-Au^2+\sum_{\alpha}\li_2(e^{-\alpha u})\sum_{\beta,\gamma} \beta^{-1}S_{\alpha\beta\gamma}$ and consider the Eulerien series
\[\ch(z;q)=\sum_{m=0}^{\infty}\frac{q^{Am^2+Bm}z^{m}}{\prod\limits_{\alpha,\beta,\gamma}(q^{\alpha m+\gamma};q^{\beta})_{\infty}^{S_{\alpha\beta\gamma}}}.\]
We prove that if there exist an $\varepsilon>0$ such that $H_{-1}(u)$ is an increasing function on $[0,\varepsilon)$, then as $q\rrw 1^-$,
\[\ch(z;q)=\left(1+o\left(|\log q|^p\right)\right)\int\limits_{0}^{\infty}\frac{q^{Ax^2+Bx}z^{x}}{\prod\limits_{\alpha,\beta,\gamma}(q^{\alpha x+\gamma};q^{\beta})_{\infty}^{S_{\alpha\beta\gamma}}}\,dx\]
holds for each $p\ge 0$. We also obtain full asymptotic expansions for $\ch(z;q)$ which satisfy above condition as $q\rightarrow 1^{-}$. The complete asymptotic expansions for related basic hypergeometric series could be
derived as special cases.
\end{abstract}

\maketitle

\section{Introduction and statement of results}
We begin with the definition of Eulerian series, which could be found in \cite{MR2425181}.
\begin{definition}
Eulerian series are combinatorial formal power series which are constructed
from basic hypergeometric series.
\end{definition}

In his last letter to Hardy, Ramanujan listed $17$ examples of functions in Eulerian series that he called mock theta functions. The first three pages in which Ramanujan explained what he meant by
a "mock theta function" are very obscure. Hardy comments that a mock theta function is a function
defined by a $q$-series convergent when $|q|<1$, for which we can calculate
asymptotic formulae, when $q$ tends to a "rational point" $e^{2\pi is/r}$ of the unit
circle, of the same degree of precision as those furnished for the ordinary
theta functions by the theory of linear transformation (see \cite{MR1573993}).

In the same latter, Ramanujan also noted that for other Eulerian series, approximations analogous to mock theta function may not exist. He claimed that
\begin{equation*}
\sum_{m=0}^{\infty}\frac{q^{\frac{m(m+1)}{2}}}{(q;q)_m^2}=\sqrt{\frac{t}{2\pi\sqrt{5}}}\exp\left(\frac{\pi^2}{5t}+c_1t+\cdot\cdot\cdot+c_pt^p+O(t^{p+1})\right)
\end{equation*}
holds for each integer $p\ge 1$, $q=e^{-t}$, $t\rrw 0^+$ with infinitely many $c_j\neq 0$. Here we use the the $q$-Pochhammer symbol
$
(a;q)_{m}=\prod_{k=0}^{m-1}(1-aq^{k})
$
for $a, q\in\cb$, $|q|<1$ and $m\in\nb\cup \{\infty\}$.  Although this example have been discussed by Watson \cite{MR1573993} and McIntosh \cite{MR1310726},  we can't prove this claim until today. In same paper \cite{MR1310726}, McIntosh also provided the complete asymptotics: Let $a, b\in \rb_{>0}$, $c, t\in\rb$, $q=e^{-t}$ with $t\rrw 0^+$, then
\begin{equation}\label{mci1}
\sum_{m=0}^{\infty}\frac{q^{bm^2+cn}}{(q;q)_n}a^m\sim\exp\left(\frac{C_{-1}}{t}+\sum_{k=0}^{\infty}C_{k}t^k\right),
\end{equation}
 where $C_k$ are constants depends only on $a$, $b$ and $c$. He note that his method could applicable to wide variety unimodal series with some limit conditions which required, see Theorem 2 of \cite{MR1618298}.

 Moreover, let
$A$ be a positive definite symmetric $r\times r$ matrix, $B$ a vector of length $r$, and
$C$ a scalar, all three with rational coefficients. Zagier \cite[Section 3, Chapter II]{MR2290758}  define a function $f_{A,B,C}(z)$
by the $r$-fold $q$-hypergeometric series
\begin{equation}\label{2}
f_{A,B,C}(z)=\sum_{{\bf x}=(x_1,\dots,x_r)\in\nb^r}\frac{q^{\frac{1}{2}{\bf x}^TA{\bf x}+{\bf x}^TB+C}}{(q;q)_{x_1}\dots(q;q)_{x_r}},
\end{equation}
with $z\in\cb(\Im z>0)$ and ask when $f_{A,B,C}(z)$ is a modular function. Zagier give a method involving the asymptotic expansion of \eqref{2}, with $q=e^{-t}$ for $t\rrw 0^+$. In \cite{MR2290758}, Zagier also outline some methods to computing the asymptotics and solve the question for $r=1$. In \cite{MR2864462}, Vlasenko and Zwegers use the ideal comes from Zagier to give the asymptotics for $r\ge 2$.

Finally, the $q$-hypergeometric series are $q$-analogue generalizations of generalized hypergeometric series. In 1940, Wright \cite{MR0001296,MR0003876} has been established six theorems on the asymptotic expansion of the generalized hypergeometric function. Zhang \cite{MR2382736,MR2845514} has investigated Plancherel-Rotach type asymptotics for certain $q$-hypergeometric series. However, nothing of the asymptotics as \eqref{mci1} is known for more general $q$-hypergeometric series.\newline

The purposes of this paper is establish a complete asymptotic expansion for more general Eulerian series or $q$-hypergeometric series. We first fixed the following $q$-notation:
\[(a;q)_z=\frac{(a ;q)_{\infty}}{(aq^{z};q)_{\infty}},~ (a_{1},a_{2},\dotsc,a_{n};q)_{z}=\prod_{j=1}^{n}(a_{j};q)_{z}
\]
for $a, a_1,\dots a_n, q\in\cb$, $|q|<1$ and $z\in\cb$. We will focus on the formal Eulerian series
\begin{equation}\label{mqs}
\sh(z;q)=\sum_{m=0}^{\infty}\frac{q^{Am^2+Bm}z^m}{\prod_{a,b,c,d}\left(q^{a};q^{b}\right)_{c m+d}^{\cs(a,b,c,d)}},
\end{equation}
where $q\in(0,1)$, $z=e^v$; the triple $(A,v,B)\in\left(\rb_{>0}\times\rb^2\right)\cup\left(\{0\}^2\times\rb_{>0}\right)\cup\left(\{0\}\times\rb_{<0}\times\rb\right)$; the product $\prod_{a, b, c, d}$ is a finite product, every quadruple $(a,b,c,d)\in\rb^4$ with $b,c,a+bd>0$ and $\cs(a,b,c,d)\in\rb $ is a function in $a,b,c,d$.  Clearly,
\begin{equation}\label{520}
\sh(z; q)=\ch(z; q)\prod_{a,b,c,d}\left(q^{a};q^{b}\right)_{\infty}^{-\cs(a,b,c,d)},
\end{equation}
where
\begin{equation}\label{mqs0}
\ch(z;q)=\sum_{m=0}^{\infty}\left(\prod_{a,b,c,d}\left(q^{bcm+a+bd};q^{b}\right)_{\infty}^{\cs(a,b,c,d)}\right)q^{Am^2+Bm}z^m.
\end{equation}
The asymptotics of the general term of the product in \eqref{520} has been understood well by McIntosh \cite{MR1703273}. Thus we just need consider $\ch(z;q)$, which could be rewritten as the following simple form
\begin{equation}\label{mqs1}
\ch(z;q)=\sum_{m=0}^{\infty}\frac{q^{Am^2+Bm}z^m}{\prod_{\alpha,\beta,\gamma}(q^{\alpha m+\gamma};q^{\beta})_{\infty}^{S_{\alpha\beta\gamma}}}.
\end{equation}

In order to formulate the main result of this paper, we first denote by
\begin{align}\label{Hm1}
H_{-1}(u)&=u\log z-Au^2+\sum_{\alpha}\li_2(e^{-\alpha u})\sum_{\beta,\gamma} \beta^{-1}S_{\alpha\beta\gamma}\nonumber\\
&:=u\log z-Au^2-\sum_{1\le j\le H}\li_2(e^{-\alpha_j u})f(\alpha_j)
\end{align}
with $\li_2(\cdot)$ is the dilogarithm function be defined by \eqref{PLF1}, $f(\alpha_j)\neq 0$ for $1\le j\le H$ and $\alpha_1<\alpha_2<\dots<\alpha_H$. Comparing \eqref{mqs0} and \eqref{mqs1} we also have
\begin{equation}\label{Hm2}
H_{-1}(u)=vu-Au^2-\sum_{b,c}\li_{2}(e^{-bc u})\sum_{a,d}\frac{\cs(a,b,c,d)}{b}.
\end{equation}
Then, our main results as follows.
\begin{theorem}\label{mt}Let $\ch(z;q)$, $\sh(z;q)$ and $H_{-1}(u)$ be defined as above. If there exist an $\varepsilon>0$ such that $H_{-1}(u)$ is an increasing function on $[0,\varepsilon)$, then as $q\rrw 1^-$,
\[\ch(z;q)=\left(1+o\left(|\log q|^p\right)\right)\int\limits_{0}^{\infty}\frac{q^{Ax^2+Bx}z^{x}}{\prod_{\alpha,\beta,\gamma}(q^{\alpha x+\gamma};q^{\beta})_{\infty}^{S_{\alpha\beta\gamma}}}\,dx\]
holds for each $p\ge 0$. Furthermore, we have
\[\sh (z;q)=\left(1+o\left(|\log q|^p\right)\right)\int\limits_{0}^{\infty}\frac{q^{Ax^2+Bx}z^{x}}{\prod_{a,b,c,d}\left(q^a;q^b\right)_{c x+d}^{\cs(a,b,c,d)}}\,dx\]
holds for each $p\ge 0$ as $q\rrw 1^-$.
\end{theorem}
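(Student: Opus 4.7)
Write $t=-\log q$ so that $t\to 0^{+}$, and denote by $F(x)=q^{Ax^{2}+Bx}z^{x}/\prod_{\alpha,\beta,\gamma}(q^{\alpha x+\gamma};q^{\beta})_{\infty}^{S_{\alpha\beta\gamma}}$ the common integrand of the sum and integral; this is smooth and positive on $[0,\infty)$ because $\gamma>0$ for every factor. My plan is to compare the sum and the integral via the Euler--Maclaurin formula, after obtaining a uniform asymptotic expansion of $\log F$ as $t\to 0^{+}$. Starting from McIntosh's uniform expansion of $\log(q^{\alpha x+\gamma};q^{\beta})_{\infty}$ in \cite{MR1703273}, I expect to derive
\[\log F(x)=\frac{H_{-1}(tx)}{t}+H_{0}(tx)+tH_{1}(tx)+\cdots,\]
with leading rate function exactly the $H_{-1}$ defined in \eqref{Hm1} and each correction $H_{k}$ smooth on $(0,\infty)$.

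The hypothesis that $H_{-1}$ is increasing on $[0,\varepsilon)$, combined with the decay of $F$ at infinity coming from the admissible choices of $(A,v,B)$, guarantees that the global maximum of $\log F$ exceeds its value at $x=0$ by an amount of order $1/t$. An elementary Laplace argument then shows that both $\sum_{m\ge 0}F(m)$ and $\int_{0}^{\infty}F(x)\,dx$ are concentrated in a Gaussian window of width $O(t^{-1/2})$ around the maximiser $m^{*}$, while the tails contribute only $O(e^{-c/t})$ relative to either.

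I next apply the Euler--Maclaurin formula
\[\sum_{m=0}^{\infty}F(m)-\int_{0}^{\infty}F(x)\,dx=\frac{F(0)}{2}-\sum_{k=1}^{K}\frac{B_{2k}}{(2k)!}F^{(2k-1)}(0)+R_{K},\]
with $R_{K}=\int_{0}^{\infty}B_{2K}(\{x\})F^{(2K)}(x)\,dx/(2K)!$, valid for each $K\ge 1$ since $F$ and its derivatives vanish at infinity. The boundary values satisfy $|F^{(j)}(0)|\le C_{j}F(0)(\log(1/t))^{j}$, and $F(0)$ is exponentially smaller than $\int_{0}^{\infty}F(x)\,dx$, so the boundary contributions are negligible. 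For the remainder, differentiating $\log F$ yields $(\log F)^{(j)}(x)=t^{j-1}H_{-1}^{(j)}(tx)+O(t^{j})$; within the peak $|x-m^{*}|=O(t^{-1/2})$ this forces $|(\log F)'(x)|=O(t^{1/2})$ and $|(\log F)^{(j)}(x)|=O(t^{j-1})$ for $j\ge 2$, and the Fa\`a di Bruno formula then produces $|F^{(2K)}(x)|\le Ct^{K}F(x)$ uniformly on the peak, while outside the peak $F$ itself is exponentially small. Hence $|R_{K}|\le C_{K}t^{K}\int_{0}^{\infty}F(x)\,dx$.

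Combining these estimates gives $\ch(z;q)/\int_{0}^{\infty}F(x)\,dx=1+O(t^{K})+O(e^{-c/t})$ for every $K\ge 1$, which is $1+o(|\log q|^{p})$ for every $p\ge 0$ as required. The statement for $\sh(z;q)$ then follows from the factorisation \eqref{520} applied on both sides. The principal technical obstacle I anticipate is producing the uniform expansion of $\log(q^{\alpha x+\gamma};q^{\beta})_{\infty}$ together with uniform derivative bounds valid both when $q^{\alpha x+\gamma}$ is close to $q^{\gamma}<1$ (for small $x$) and when it is close to $1$ (near the maximiser $m^{*}$); once this is in place, the Euler--Maclaurin step is essentially routine.
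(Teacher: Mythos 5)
Your high-level engine is the same one the paper uses: obtain the expansion $\log F(x)\sim t^{-1}H_{-1}(tx)+H_0(tx)+\cdots$, compare sum and integral by Euler--Maclaurin, and control the remainder via Fa\`a di Bruno with $|F^{(N)}(x)|\le C\,F(x)\prod_j|(\log F)^{(j)}(x)|^{m_j}$. Where you differ is that you apply Euler--Maclaurin once on all of $[0,\infty)$, whereas the paper truncates to a window around the set $\ss_H$ of local maxima of $H_{-1}$ (Lemmas \ref{th1}--\ref{th2}, applied via Lemma \ref{fasy}) and treats the left and right tails by separate arguments (Lemmas \ref{l51}, \ref{l52}, \ref{l53}).

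There is a genuine gap in the step where you assert concentration ``in a Gaussian window of width $O(t^{-1/2})$ around the maximiser $m^*$'' and then deduce $|(\log F)'(x)|=O(t^{1/2})$, $|F^{(2K)}(x)|\le Ct^{K}F(x)$ on that window. The hypothesis that $H_{-1}$ is increasing on $[0,\varepsilon)$ does not guarantee the existence of any maximiser $m^*\asymp 1/t$. When $A=v=0$ and $f(\alpha_1)>0$, the function $H_{-1}(u)=\sum_\alpha\li_2(e^{-\alpha u})\sum_{\beta,\gamma}\beta^{-1}S_{\alpha\beta\gamma}$ can be strictly increasing on all of $(0,\infty)$ with $H_{-1}(\infty)=0$, so $\ss_H=\emptyset$ and $H_{-1}'$ has no zero. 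In that regime the mass of $\sum_m F(m)$ and $\int F$ sits at $m\asymp \alpha_1^{-1}t^{-1}\log(1/t)$, the window has width $\asymp 1/t$ (not $t^{-1/2}$), and the profile is of $\Gamma$-type rather than Gaussian: the integral is $\asymp t^{B/\alpha_1-1}$, not $\asymp t^{-1/2}e^{C/t}$. This is precisely the content of the paper's Lemma \ref{l52}, and it requires a separate chain of derivative estimates ($|\cf^{(j)}(x)|\ll t^{j}$ near the new stationary point, tails controlled by $e^{-c/\sqrt t}$). Your Euler--Maclaurin scheme can likely be repaired for this case, but as written it does not cover it, and it is not a small modification --- the exponents and the change of variables are different. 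Secondarily, the theorem also allows a degenerate maximum $H_{-1}''(u^*)=0$ (the $k_u>1$ case of Lemma \ref{mainlemma}); there the window width is $t^{-1+1/(2k_u)}$ and your exponents change accordingly, though this is a more mechanical fix. Finally, you should make explicit why the region $x\in[0,c/t]$, where $(\log F)'=O(|\log t|)$ rather than $O(t^{1/2})$, contributes negligibly to the remainder integral; this is exactly where the monotonicity hypothesis is used, via $H_{-1}(0)<H_{-1}(u^*)$.
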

We shall first prove the following more general results and then Theorem \ref{mt} could be derived as special cases.
\begin{lemma}\label{th1}Let $F(x,t)$ and $E_{\ell}(u)(\ell\in\zb_{\ge -1})$ be some real analytic functions in $(0,\infty)$. For each $x \gg 1/t$ and $m\in\nb$, suppose that $F(x,t)$ satisfies the complete asymptotics
\[\frac{\P^mF(x,t)}{\P x^{m}}\sim t^{m}\sum_{\ell=-1}^{\infty}E_{\ell}^{\langle m\rangle}(xt)t^{\ell}\]
for $t\rrw 0^+$. Also suppose that the set of all local maximum points of $E_{-1}(u)$ on $(0,\infty)$ is a nonempty finite set $\ss_E$. Then, for each $p\ge 0$, as $t\rrw 0^+$
\begin{equation*}
\sum_{u_m/t< m\le u_M/t }\exp\left({F(m,t)}\right)= \left(1+o\left(t^p\right)\right)\int\limits_{u_m/t}^{u_M/t}\exp\left({F(x,t)}\right)\,dx,
\end{equation*}
where $u_m=\min\limits_{u\in\ss_E}u-1/|\log t|$ and $u_M=\max\limits_{u\in\ss_E}u+1/|\log t|$.
\end{lemma}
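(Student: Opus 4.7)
The plan is to combine Laplace-style localization with a high-order Euler--Maclaurin comparison, exploiting that, by the $m=0$ case of the hypothesis,
\[ F(x,t)\sim t^{-1}E_{-1}(xt)+E_0(xt)+tE_1(xt)+\cdots,\]
so both the sum and the integral are concentrated, on the microscopic scale $|x-u_j^{*}/t|\lesssim 1/\sqrt{t}$, about each local maximum $u_j^{*}\in\ss_E$. The argument then compares the two sides peak by peak.

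First, I would partition. Enumerate $\ss_E=\{u_1^{*}<\cdots<u_N^{*}\}$ and fix a small $\delta>0$ (independent of $t$) so that the intervals $V_j=[u_j^{*}-\delta,u_j^{*}+\delta]$ are pairwise disjoint and each contains no other critical point of $E_{-1}$. On the complement $K=[u_m,u_M]\setminus\bigcup_j V_j$ one has $E_{-1}\le M-\eta$ for some $\eta>0$, with $M=\max_j E_{-1}(u_j^{*})$. Hence the parts of the sum and of the integral arising from $xt\in K$ are each $O(t^{-1}e^{(M-\eta)/t})$, exponentially smaller than the Laplace-type main term $\asymp t^{-1/2}e^{M/t}$ produced by the dominant peak(s), and therefore absorbed into the $o(t^p)$ error for every $p\ge 0$.

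Next, on each peak sub-interval $V_j/t=[u_j^{*}/t-\delta/t,\,u_j^{*}/t+\delta/t]$ I would apply the Euler--Maclaurin formula of order $2K$ to $f(x)=e^{F(x,t)}$,
\[\sum_{a\le n\le b}f(n)-\int_a^b f(x)\,dx=\tfrac{1}{2}(f(a)+f(b))+\sum_{k=1}^{K}\frac{B_{2k}}{(2k)!}\bigl(f^{(2k-1)}(b)-f^{(2k-1)}(a)\bigr)+R_K,\]
with $|R_K|\le C\int_a^b|f^{(2K+1)}|$. Since $E_{-1}'(u_j^{*})=0$, Taylor expansion around $u_j^{*}$ combined with the hypothesis $\partial_x^{m}F=t^{m-1}E_{-1}^{(m)}(xt)+O(t^m)$ yields $|\partial_xF|=O(\sqrt{t})$ and $|\partial_x^{m}F|=O(t^{m-1})$ for $m\ge 2$ on the Gaussian scale $|x-u_j^{*}/t|\lesssim 1/\sqrt{t}$, while outside this scale $f$ is itself exponentially small. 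Faà di Bruno then gives $|f^{(m)}|/f=O(t^{m/2})$ uniformly on $V_j/t$, hence $|R_K|\le C\,t^{K+1/2}\int_{V_j/t}f$, while the boundary corrections at $a,b$ (where $|xt-u_j^{*}|=\delta$) are exponentially smaller than the corresponding peak value. Taking $K\ge p$, summing over $j$, and combining with the off-peak negligibility of the previous paragraph yields the stated $(1+o(t^p))$. The $\pm 1/|\log t|$ shifts defining $u_m,u_M$ are exactly the right magnitude: since $1/|\log t|\gg\sqrt{t}$, the $\sqrt{t}$-wide Gaussian peaks at the extreme maxima $u_1^{*}$ and $u_N^{*}$ remain safely inside $[u_m,u_M]$ for all small $t$.

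The hardest part will be verifying the derivative estimate $|f^{(m)}|/f=O(t^{m/2})$ uniformly at a possibly degenerate local maximum (where $E_{-1}''(u_j^{*})=0$): then the effective peak width $1/\sqrt{t}$ must be replaced by $t^{-k/(2(k+1))}$, with $k\ge 1$ the vanishing order of $E_{-1}'$ at $u_j^{*}$, and one must rederive the bounds on $\partial_x^{m}F$ from the hypothesis with this sharper scaling and then check that the Euler--Maclaurin remainder still beats every prescribed $t^p$. The remaining ingredients (localization, off-peak estimate, Faà di Bruno, assembly) carry over without change.
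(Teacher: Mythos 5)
Your strategy---localize around the maxima of $E_{-1}$, show the off-peak contributions to both the sum and the integral are exponentially small relative to the main term, and compare the two sides on each peak via a high-order Euler--Maclaurin formula whose boundary terms and remainder are negligible---is exactly the paper's. But two steps do not hold as written. First, the claim that Fa\`a di Bruno gives $|f^{(m)}|/f=O(t^{m/2})$ \emph{uniformly on} $V_j/t$ is false: on the fixed-width window $|xt-u_j^{*}|\le\delta$ one only has $\partial_xF(x,t)=E_{-1}'(xt)+O(t)$, and $E_{-1}'(xt)$ is genuinely of order $1$ once $|xt-u_j^{*}|\asymp\delta$; the bound $O(\sqrt{t})$ holds only on the core $|x-u_j^{*}/t|\lesssim t^{-1/2}$. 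Hence your remainder estimate $|R_K|\le Ct^{K+1/2}\int_{V_j/t}f$ is not justified on all of $V_j/t$. This is repairable---either split $V_j/t$ further and use that $f$ itself is exponentially small off the core, or (as the paper does) run Euler--Maclaurin only on a shrinking window $|x-u/t|\le t^{\theta_u-1}$ with $\theta_u\in(1/(2k_u+1),1/(2k_u))$, on which $\partial_x^Nf\ll t^{\theta_uN}$ does hold uniformly---but as stated the step is wrong.

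Second, the degenerate case is not an optional refinement: the lemma (and the companion Lemma \ref{th2}, which is phrased in terms of $k_u$) must cover maxima at which $E_{-1}''(u_j^{*})=0$ and the first nonvanishing even-order derivative is $E_{-1}^{\langle 2k_u\rangle}(u_j^{*})$ with $k_u>1$. You defer this entirely, and the scaling you propose for it is incorrect: if $E_{-1}'$ vanishes to order $k=2k_u-1$ at $u_j^{*}$, the peak width in $x$ is $t^{-1+1/(k+1)}=t^{-k/(k+1)}$, not $t^{-k/(2(k+1))}$ (your exponent does not even reduce to $-1/2$ at $k=1$). On that scale the derivative bounds weaken from $O(t^{m/2})$ to $O(t^{\theta m})$ with $\theta$ of size $1/(2k_u)$, so the Euler--Maclaurin remainder is only $O(t^{\theta(2K+1)})$ relative to the local integral; one still beats every $t^p$ by choosing $K$ large in terms of $p$ and $\max_{u\in\ss_E}k_u$, but this verification, together with the corrected widths and the corresponding off-core exponential smallness, is precisely the content that is missing. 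Until it is carried through, the proof covers only the nondegenerate case.
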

We have the complete asymptotic  expansion for above Lemma \ref{th1}
\begin{lemma}\label{th2}Let $F(x,t)$, $E_{\ell}(u)(\ell\in\zb_{\ge -1})$, $u_m$ and $u_M$ be defined as Lemma \ref{th1}.
Let $\kappa_{2\ell}(u,t)$ be defined by \eqref{kappa}. Then we roughly have for each $\ell\in\nb_1$,
\[\kappa_{2\ell}(u,t)\ll t^{\frac{\ell}{k_u(2k_u+1)}}.\]
Further more, we have the asymptotic expansion in $\kappa_{2\ell}(u,t)$ of the form
\begin{equation*}
\sum_{u_m/t<m\le u_M/t}e^{F(m, t)}\sim \sum_{
u\in\ss_E}\frac{\exp(F(u/t,t))}{V(u,t)}\sum_{\ell=0}^{\infty}\Gamma\left(\frac{2\ell+1}{2k_u}\right)\frac{\kappa_{2\ell}(u,t)}{k_u},
\end{equation*}
where $k_u$ is the minimum positive integer such that
$E_{-1}^{\langle 2k_u\rangle}(u)\neq 0$, $\kappa_{0}(u,t)=1$ and $V(u,t)$ be defined by \eqref{V}. In particular, we have the leading asymptotics
\[\sum_{u_m/t<m\le u_M/t}e^{F(m, t)}\sim \sum_{
u\in\ss_E}\frac{e^{E_0(u)}}{k_u}\Gamma\left(\frac{1}{2k_u}\right)\left(\frac{-(2k_u)!}{E_{-1}^{\langle 2k_u\rangle}(u)}\right)^{\frac{1}{2k_u}}t^{-1+\frac{1}{2k_u}}e^{{E_{-1}(u)}/{t}}.\]
\end{lemma}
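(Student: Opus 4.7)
The plan is to combine Lemma \ref{th1} with a multi-scale Laplace analysis at each local maximum $u \in \ss_E$. By Lemma \ref{th1}, the discrete sum differs from
\[I(t) := \int_{u_m/t}^{u_M/t} e^{F(x,t)}\,dx\]
by a factor $1 + o(t^p)$ for every $p \ge 0$, so it suffices to produce the complete asymptotic expansion of $I(t)$. Because $F(x,t) \sim E_{-1}(xt)/t + E_0(xt) + \sum_{\ell\ge 1} E_\ell(xt) t^\ell$, the integrand is dominated by $e^{E_{-1}(xt)/t}$, and the Laplace principle localizes the mass to the local maxima of $E_{-1}$, i.e.\ to the points of $\ss_E$.

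I would first choose a fixed partition of unity subordinate to neighborhoods of the points of $\ss_E$. On the complement of these neighborhoods, $E_{-1}(xt)$ is bounded above by a strict maximum, so this portion of $I(t)$ is exponentially smaller than $\exp(F(u/t,t))$ for any $u\in\ss_E$ and is absorbed into the error. At a fixed $u \in \ss_E$, the hypotheses $E_{-1}^{\langle j\rangle}(u) = 0$ for $1 \le j < 2k_u$ together with $E_{-1}^{\langle 2k_u\rangle}(u) < 0$ dictate the Laplace-scale substitution $x = u/t + t^{1/(2k_u)-1}\tau$. Under this change,
\[\frac{E_{-1}(xt)}{t} = \frac{E_{-1}(u)}{t} + \frac{E_{-1}^{\langle 2k_u\rangle}(u)}{(2k_u)!}\tau^{2k_u} + O\bigl(t^{1/(2k_u)}\tau^{2k_u+1}\bigr),\]
and $dx = t^{1/(2k_u)-1}\,d\tau$; the scaling constants collected here are exactly what the factor $V(u,t)^{-1}$ records.

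Next I would Taylor expand each $E_\ell(xt)$ around $u$, exponentiate, and collect the correction into a formal power series $\Phi(\tau,t)=\sum_j a_j(u,t)\tau^j$ whose coefficients are polynomial in $t^{1/(2k_u)}$. Term-by-term integration against the Gaussian-type kernel $\exp\bigl(E_{-1}^{\langle 2k_u\rangle}(u)\tau^{2k_u}/(2k_u)!\bigr)$ uses the identity
\[\int_{-\infty}^{\infty}\tau^{2\ell}\exp\!\Bigl(\frac{E_{-1}^{\langle 2k_u\rangle}(u)}{(2k_u)!}\tau^{2k_u}\Bigr)d\tau = \frac{2}{2k_u}\Gamma\!\Bigl(\frac{2\ell+1}{2k_u}\Bigr)\Bigl(\frac{-(2k_u)!}{E_{-1}^{\langle 2k_u\rangle}(u)}\Bigr)^{(2\ell+1)/(2k_u)},\]
while odd powers of $\tau$ vanish by parity. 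Grouping the even-power contributions by total $t$-weight produces the series $\sum_\ell \Gamma((2\ell+1)/(2k_u))\kappa_{2\ell}(u,t)/k_u$, and the $\ell=0$ term with $\kappa_0=1$ recovers the stated leading-order asymptotic once the prefactor $\exp(F(u/t,t))$ is reinstated.

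The main obstacle is the rigorous justification of the formal steps. Two points demand care: first, truncating the $\tau$-integral at $|\tau| \le \delta t^{1/(2k_u)-1}$ and then extending to all of $\rb$ must cost only super-polynomially small error, which follows from the strict maximality of $E_{-1}$ at $u$ combined with the polynomial growth of $\Phi$; second, the Taylor/exponential rearrangement of $F(x,t)$ on the window $xt-u = O(t^{1/(2k_u)})$ must be shown to produce an honest asymptotic series rather than a merely formal one, using the uniformity built into the hypothesis of Lemma \ref{th1} on all derivatives $\partial^m F/\partial x^m$. Finally, carefully tracking the net $t$-power attached to each $\tau^{2\ell}$ term after regrouping yields the claimed bound $\kappa_{2\ell}(u,t)\ll t^{\ell/(k_u(2k_u+1))}$.
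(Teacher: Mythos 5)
Your proposal is correct and follows essentially the same route as the paper: reduce to the integral via Lemma \ref{th1}, localize at each $u\in\ss_E$, rescale by $x=u/t+t^{1/(2k_u)-1}\tau$ (which is exactly the normalization $V(u,t)^{-1}$), expand the exponential of the non-$2k_u$ terms into the series defining $\kappa_\ell(u,t)$, kill odd powers by parity, and integrate term by term with the stated Gamma identity. The paper handles the truncation/extension issue you flag by choosing windows $|x-u/t|\le t^{\theta_u-1}$ with $\theta_u\in(1/(2k_u+1),1/(2k_u))$, incurring only errors of size $e^{-\delta_u t^{-1+2\theta_u k_u}}$, and derives the bound $\kappa_{2\ell}(u,t)\ll t^{\ell/(k_u(2k_u+1))}$ from the estimates \eqref{lambdae} exactly as you describe.
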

\begin{remark}
From the assumption of Lemma \ref{th1}, \eqref{V} and \eqref{kappa} it is not difficult to see that $V(u,t)$ and $\kappa_{2\ell}(u,t)$ have asymptotic expansions in powers of $t^{1/k_u}$, this implies that the asymptotic expansion of Lemma \ref{th2} could be rewritten as an asymptotic expansion in powers of $t^{1/k_u}$ of the form
\[\sum_{u_m/t<m\le u_M/t}e^{F(m, t)}\sim \sum_{
u\in\ss_E}C_F(u)t^{-1+\frac{1}{2k_u}}e^{{E_{-1}(u)}/{t}}\left(1+\sum_{\ell= 1}^{\infty}C_{Fj}(u)t^{\ell/k_u}\right)\]
with
\[C_F(u)=\frac{e^{E_0(u)}}{k_u}\Gamma\left(\frac{1}{2k_u}\right)\left(\frac{-(2k_u)!}{E_{-1}^{\langle 2k_u\rangle}(u)}\right)^{\frac{1}{2k_u}}\]
and for each $j\in\nb_1$, $C_{Fj}(u)\in\rb$ depends only on $F(\cdot)$ and $u$.
\end{remark}
Finally, we obtain the full asymptotic behavior of Eulerian series \eqref{mqs} and \eqref{mqs1}.
\begin{theorem}\label{mc}Let $H_{-1}(u)$ be defined by \eqref{Hm1} and $H_{\ell}(u)(\ell\in\zb_{\ge 0})$ be defined by Lemma \ref{fasy}. Also let $\ss_H$ be the set of all local maximum points of $H_{-1}(u)$ on $(0,\infty)$. Then, under the assumption of Theorem \ref{mt} we have
\[\ch\left(z;q\right)=N_{\ch}(t)+I_{\ch}(t),\]
where $N_{\ch}(t)=0$ for $\ss_H$ is an empty set and if $\ss_H$ nonempty then
\[N_{\ch}(t)\sim
\sum_{
u\in\ss_H}C_{u}t^{-1+\frac{1}{2m_{u}}}e^{H_{-1}(u)/t}\bigg(1+\sum_{j\ge 1}C_{j}(u)t^{j/m_{u}}\bigg)
\]
with the coefficients $C_{j}(u)\in\rb$ are constant depends only on $\ch(z;\cdot)$ and $u$ be determined by \eqref{formula}, $m_{u}$ is the minimum positive integer such that $H_{-1}^{\langle 2m_u\rangle}(u)\neq 0$ and
\[
C_u=\frac{e^{H_0(u)}\Gamma\left(\frac{1}{2m_{u}}\right)}{m_{u}}\left(\frac{-(2m_{u})!}{H_{-1}^{\langle2m_{u}\rangle}(u)}\right)^{\frac{1}{2m_{u}}}
;\]
$I_{\ch}(t)=0$ for $A>0$ or $\log z<0$ or $f(\alpha_1)<0$ and if $f(\alpha_1)>0$ then
\[
I_{\ch}(t)\sim \frac{\Gamma\left({B}/{\alpha_1}\right)}{\alpha_1[f(\alpha_1)]^{B/\alpha_1}}t^{B/\alpha_1-1}\bigg(1+\sum_{\lambda\in\Lambda(\ch)}C_{\lambda}(\ch)t^{\lambda}\bigg)
,\]
where the set $\Lambda(\ch)\subset \qb_{>0}$ satisfy $\inf_{\lambda\neq \mu,\lambda,\mu\in\Lambda(\ch)}|\lambda-\mu|>0$ be defined by \eqref{Lambda}, and $C_{\lambda}(\ch)\in\rb$ for $\lambda\in\Lambda(\ch)$ be determined by \eqref{formula2} depends only on $\ch(1; \cdot)$.
\end{theorem}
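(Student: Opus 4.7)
The plan is to split $\ch(z;q)$ into three ranges matching the endpoints of Lemma \ref{th1}: a middle range handled by Lemma \ref{th2} (producing $N_\ch$), a far tail that is negligible, and a small-$m$ range producing $I_\ch$. Setting $q = e^{-t}$ and writing $F(x,t)$ for the logarithm of the summand of \eqref{mqs1}, I would invoke the complete asymptotic expansion of $\log(q^s;q^\beta)_\infty$ supplied by Lemma \ref{fasy} to obtain
\[
F(x,t) \sim \frac{H_{-1}(xt)}{t} + \sum_{\ell \geq 0} H_\ell(xt)\,t^\ell,
\]
with the leading coefficient matching \eqref{Hm1} and the same expansion holding for all $x$-derivatives; this is exactly the hypothesis required by Lemma \ref{th1} and Lemma \ref{th2} with $E_\ell = H_\ell$ and $\ss_E = \ss_H$.

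When $\ss_H$ is nonempty, Lemma \ref{th2} applied to the middle range $u_m/t < m \le u_M/t$ would yield
\[
\sum_{u\in\ss_H}\frac{e^{F(u/t,t)}}{V(u,t)}\sum_{\ell\ge 0}\Gamma\!\left(\frac{2\ell+1}{2m_u}\right)\frac{\kappa_{2\ell}(u,t)}{m_u},\quad m_u := k_u.
\]
Substituting $F(u/t,t) = H_{-1}(u)/t + \sum_{\ell \ge 0} H_\ell(u)t^\ell$ and regrouping in fractional powers $t^{j/m_u}$ (via the remark after Lemma \ref{th2}) would produce the announced $N_\ch(t)$, with leading constant $C_u$ coming from the $\ell = 0$ term and subleading $C_j(u)$ recorded by \eqref{formula}. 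The far tail $m > u_M/t$ is bounded by $\exp((\max_{\ss_H}H_{-1}(u^*) - \delta)/t)$ for some small $\delta > 0$, because $H_{-1}$ strictly decreases past $u_M$ under the hypothesis on $(A,v,B)$; that bound is absorbed into the error of the $N_\ch$ expansion.

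The main obstacle is the boundary range $m \le u_m/t$, where $u = mt$ can approach $0$ and the Laplace-type argument breaks down because $H_{-1}$ has a $u\log u$ singularity at the origin coming from the dilogarithm expansion $\li_2(e^{-\alpha u}) = \zeta(2) + \alpha u(\log(\alpha u) - 1) + O(u^2)$. I would treat this range directly via the $q$-gamma asymptotic
\[
(q^s; q^\beta)_\infty \sim (\beta t)^{1-s/\beta}\Gamma(s/\beta)^{-1}(2\pi/\beta t)^{1/2}e^{-\pi^2/(6\beta t)},
\]
which converts each summand into an explicit product of a power of $t$ and a Gamma-ratio in $m$. In the excluded cases ($A > 0$, or $v < 0$, or $f(\alpha_1) < 0$) this representation forces the range to be exponentially smaller than $N_\ch$, so $I_\ch \equiv 0$. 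In the remaining case ($A = 0$, $v = 0$, $B > 0$, $f(\alpha_1) > 0$), a Mellin/Euler--Maclaurin argument in $m$ would produce the Gamma integral giving
\[
\frac{\Gamma(B/\alpha_1)}{\alpha_1[f(\alpha_1)]^{B/\alpha_1}}\,t^{B/\alpha_1-1}
\]
as the leading order, and the correction $\sum_{\lambda\in\Lambda(\ch)}C_\lambda(\ch)t^\lambda$ from subleading terms in the Pochhammer asymptotic. The delicate verification is identifying $\Lambda(\ch)$ as the discrete set of exponents generated by this Mellin expansion (checking the gap condition $\inf_{\lambda\neq\mu}|\lambda-\mu|>0$ and matching \eqref{Lambda}) and confirming that $C_\lambda(\ch)$ depends only on $\ch(1;\cdot)$ via \eqref{formula2}.
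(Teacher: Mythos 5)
Your treatment of $N_{\ch}$ is sound and matches the paper: you verify via Lemma \ref{fasy} that $\cf(x,t)$ satisfies the hypotheses of Lemmas \ref{th1}--\ref{th2} with $E_{\ell}=H_{\ell}$, apply Lemma \ref{th2} to the middle range, and regroup the expansion of $\cf(u/t,t)$ and $\kappa_{2\ell}(u,t)$ in powers of $t^{1/m_u}$. The problem is the $I_{\ch}$ part: you have interchanged the roles of the two boundary ranges, and the interchange is fatal precisely in the one case where $I_{\ch}\neq 0$.

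The term $I_{\ch}(t)$ comes from the \emph{large}-$m$ tail, not from $m\le u_m/t$. When $A=v=0$, $B>0$ and $f(\alpha_1)>0$, one has $H_{-1}(u)=-\sum_j\li_2(e^{-\alpha_j u})f(\alpha_j)$, whose derivative behaves like $\alpha_1f(\alpha_1)e^{-\alpha_1u}>0$ as $u\rrw\infty$; thus $H_{-1}$ is eventually \emph{increasing} toward its supremum $0$ at infinity, and your assertion that ``$H_{-1}$ strictly decreases past $u_M$'' is false exactly here. The tail $m>f_M/t$ is therefore not exponentially negligible: for $xt\gg\log(1/t)$ one has $\cf(x,t)=-Bxt-\tfrac{f(\alpha_1)}{t}e^{-\alpha_1xt}(1+o(1))$, and the substitution $y=\tfrac{f(\alpha_1)}{t}e^{-\alpha_1xt}$ turns $\int e^{\cf(x,t)}dx$ into the Gamma integral yielding $\Gamma(B/\alpha_1)\,t^{B/\alpha_1-1}/(\alpha_1 f(\alpha_1)^{B/\alpha_1})$, with the exponent set $\Lambda(\ch)$ generated by the secondary terms $e^{-k\alpha_\ell xt}$, $(k,\ell)\neq(1,1)$, as in \eqref{Kappa}--\eqref{formula2}; this is the mechanism your argument never reaches (see Lemma \ref{l52} and the discussion leading to \eqref{formula2}). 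Conversely, the small-$m$ range is \emph{always} negligible under the standing hypothesis that $H_{-1}$ is increasing on $[0,\varepsilon)$: there $\cf(x,t)=H_{-1}(xt)/t+O(|\log t|)$, so the whole range is dominated by its right endpoint (Lemma \ref{l51}), and no $q$-gamma/Mellin analysis of it can produce the power $t^{B/\alpha_1-1}$. As written, your proof would discard the very contribution that constitutes $I_{\ch}$ and would manufacture it from a range that contributes only an exponentially small error; the $I_{\ch}$ portion needs to be redone along the lines of Lemmas \ref{l52}--\ref{l53}.
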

\begin{remark}
We can obtain the complete asymptotic expansion for $\sh(z;q)$. For each $\ell\in\nb$, let the Bernoulli polynomials $B_{\ell}(x)$ and the Bernoulli number $B_{\ell}$ be defined by \eqref{BP} and \eqref{BN}, respectively. Then, we have
\begin{equation*}
\sh(z;q)\sim \ch(z;q)C_{\ch}t^{B_{\ch}}\exp\left(\frac{A_{\ch}}{t}+\sum_{\ell=1}^{\infty}A_{\ell}t^{\ell}\right)
\end{equation*}
with
\[A_{\ch}=\sum_{a,b,c,d}\frac{\pi^2\cs(a,b,c,d)}{6b}, A_{\ell}=\sum_{a,b,c,d}\frac{B_{\ell}\cs(a,b,c,d)b^{\ell}}{\ell(\ell+1)!}B_{\ell+1}\left(\frac{a}{b}\right)~ \ell\in\nb_1,\]
\[B_{\ch}=\sum_{a,b,c,d}\left(\frac{a}{b}-\frac{1}{2}\right)\cs(a,b,c,d)\;\mbox{and}\;C_{\ch}=\prod_{a,b,c,d}\left(\Gamma(a/b)/\sqrt{2\pi}\right)^{\cs(a,b,c,d)}.\]
\end{remark}

\begin{remark}It is clear that the asymptotic results of \cite{MR1310726} and \cite{MR1618298} of McIntosh could be derived as special cases of the Eulerian series \eqref{mqs} with the conditions which satisfied. Moreover, our theorem is more general, the asymptotic expansion is similar  with the definition of the mock theta function of Gordon and McIntosh \cite{MR1783627}. Furthermore, some special $\sh(1;q)$ are generating
functions in many partition problems, for example, our results can give the complete asymptotic expansion for all partition generating functions in Bringmann and Mahlburg \cite{MR3213313}. Using the Tauberian theorem of Ingham  (see \cite[Theorem 3.1]{MR3213313} and \cite{MR0005522}), which allows us to describe the asymptotic behavior of the coefficients of a power series using the analytic nature of the partition generating functions $\sh(1;q)$.
\end{remark}

This paper is organized as follows.
In Section \ref{sec2}, we first determine the stationary point of $F(x,t)$ and consider it's Taylor expansion at the stationary point.
In Section \ref{sec3}, we prove  Lemma \ref{th1} and  Lemma \ref{th2}.
In Section \ref{sec4}, we first collect properties on Bernoulli numbers, Bernoulli polynomials and the polylogarithm function. Then, we give the asymptotic facts for the general term of the product in \eqref{520} and prove that the logarithm of general term of $\ch(z;q)$ satisfies the assumption of $F(m,t)$ in  Lemma \ref{th1}.
In Section \ref{sec5} we prove Theorem \ref{mt} and Theorem \ref{mc}. We will prove Theorem \ref{mt} in Subsection \ref{sec51}--\ref{sec53} and prove Theorem \ref{mc} in last subsection of this section.
In Section \ref{sec6}, we apply our main theorem to the some confluent basic hypergeometric series, some simple Eulerien series and some mock theta functions.

\section{The stationary point of $F(x,t)$}\label{sec2}

\begin{lemma}\label{mainlemma}Let $t>0$ sufficiently small be fixed, let $F(x,t), E_{\ell}(u)$
and $\ss_E$ be defined as Lemma \ref{th1}.
Also let $\ss_{F}$ be the set of all maximum point of $F(x,t)$ for all $x\asymp 1/t$. Then for each $X\in\ss_{F}$ there exist an $u\in\ss_E$ such that
\[X=\left(u+r_u(t)\right)/t,\]
with $r_u(t)\ll t^{1/(2k_u-1)}$ and where $k_u$ is the minimum positive integer such that $E_{-1}^{\langle 2k_u\rangle}(u)\neq 0$.
\end{lemma}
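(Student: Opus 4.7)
The plan is to characterize each maximum point $X\in\ss_F$ by combining the stationarity condition $\partial_x F(X,t)=0$ with the $m=1$ instance of the asymptotic expansion of Lemma \ref{th1}. Writing $u_X:=Xt$, which stays in a fixed compact subset of $(0,\infty)$ since $X\asymp 1/t$, this gives
\[
0=\frac{\partial F}{\partial x}(X,t)=E_{-1}'(u_X)+t\,E_0'(u_X)+O(t^2),
\]
so $E_{-1}'(u_X)=O(t)$. Consequently, along any convergent subsequence $u_X\to u_\ast$ as $t\to 0^+$, one has $E_{-1}'(u_\ast)=0$.

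Next I would identify $u_\ast$ as an element of $\ss_E$. Because $X$ is a maximum of $F(\cdot,t)$, we have $\partial_x^2 F(X,t)\le 0$; the $m=2$ asymptotic gives $\partial_x^2 F(X,t)=t\,E_{-1}''(u_X)+O(t^2)$, hence $E_{-1}''(u_\ast)\le 0$. If $E_{-1}''(u_\ast)<0$, then $u_\ast$ is a strict local maximum of $E_{-1}$. In the degenerate case $E_{-1}''(u_\ast)=0$, the analyticity of $E_{-1}$ forces the first non-vanishing derivative at $u_\ast$ to be of even order and negative in order for $u_\ast$ to remain a local maximum of the dominant term $t^{-1}E_{-1}(xt)$ of $F(x,t)$; this order is precisely $2k_{u_\ast}$ by definition. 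In either case $u_\ast\in\ss_E$, and since $\ss_E$ is finite, $X$ is matched to a unique $u:=u_\ast\in\ss_E$ for all sufficiently small $t$.

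Finally I would quantify $r:=u_X-u$. The definition of $k_u$, together with $u$ being a local maximum of the analytic function $E_{-1}$, forces $E_{-1}^{\langle j\rangle}(u)=0$ for every $1\le j\le 2k_u-1$, so Taylor's formula yields
\[
E_{-1}'(u+r)=\frac{E_{-1}^{\langle 2k_u\rangle}(u)}{(2k_u-1)!}\,r^{2k_u-1}\bigl(1+O(r)\bigr).
\]
Substituting into $E_{-1}'(u_X)=O(t)$ gives $r^{2k_u-1}=O(t)$, whence $r_u(t):=r\ll t^{1/(2k_u-1)}$, as claimed. The main technical hurdle is the middle step: excluding the possibility that $u_X$ approaches a critical point of $E_{-1}$ that is not a local maximum. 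This is resolved by the second- (or higher-) order derivative test forced by $\partial_x^2 F(X,t)\le 0$, coupled with the analyticity of $E_{-1}$ and the finiteness of $\ss_E$.
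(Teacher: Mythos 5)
Your proposal is correct and follows essentially the same route as the paper: the stationarity condition together with the $m=1$ expansion forces $E_{-1}'(Xt)=O(t)$, hence $Xt\to u\in\ss_E$, and then a Taylor expansion of $E_{-1}'$ about $u$ (whose first non-vanishing derivative has even order $2k_u$ because $u$ is a local maximum) gives $r^{2k_u-1}=O(t)$, i.e.\ $r_u(t)\ll t^{1/(2k_u-1)}$. The only real difference is that you try to justify, via the second-derivative test, why the limit point is a local maximum of $E_{-1}$ rather than merely a critical point --- a step the paper simply asserts --- though your handling of the degenerate case $E_{-1}''(u_\ast)=0$ is phrased somewhat circularly.
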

\begin{proof} Letting $X\in\ss_{F}$ then
\begin{equation}\label{hd1}
0=\frac{\P F}{\P x}(X,t)\sim E_{-1}'(Xt)+\sum_{\ell=1}^{\infty}E_{\ell-1}'(Xt)t^{\ell}.
\end{equation}
Thus we have $\lim_{t\rrw 0^{+}}(Xt)\in \ss_E$, namely, $X\sim u/t$ for some $u\in\ss_E$. On the other hand, it is clear that for each $u\in \ss_E$ there exist an $r_u(t)\in\rb$ with $r_u(t)=o(t)$ for $t\rrw 0^+$ such that $X=(u+r_u(t))/t$ satisfies \eqref{hd1}. Further more, the using of Taylor theorem yields
\[\sum_{k=1}^{\infty}\frac{E_{-1}^{\langle k+1 \rangle}(u)}{k!}r_u(t)^k+\sum_{\ell=1}^{\infty}t^{\ell}\sum_{k=0}^{\infty}\frac{E_{\ell-1}^{\langle k+1 \rangle}(u)}{k!}r_u(t)^k\sim 0.\]
Since $u$ is the maximum point of $E_{-1}(x)$ in $x\in(0,\infty)$, so the minimum positive integer $k$ such that $E_{-1}^{\langle k+1\rangle}(u)\neq 0$ must be an odd integer $2k_u-1$. Moreover, we have $E_{-1}^{\langle2k_{u}\rangle}(u)<0$ and
\begin{equation*}
\sum_{k=2k_u-1}^{\infty}\frac{E_{-1}^{\langle k+1 \rangle}(u)}{k!}r_u(t)^k+\sum_{\ell=1}^{\infty}t^{\ell}\sum_{k=0}^{\infty}\frac{E_{\ell-1}^{\langle k+1 \rangle}(u)}{k!}r_u(t)^k\sim 0.
\end{equation*}
This implies that $r_u(t)\ll t^{1/(2k_u-1)}$. Which completes the proof of the lemma.
\end{proof}

Next, we obtain the Taylor series for $F(m,t)$ at $u/t$ with $u\in\ss_E$.
\begin{lemma}\label{taylor} Let $F(x,t), E_{\ell}(u)$ and $\ss_E$ be defined as Lemma \ref{th1}. Also let $t>0$ sufficiently small. Then there exist a constant $\theta_{E}>0$ depends only on $E_{-1}(\cdot)$ such that for $m\in\left[(u-\theta_E)/t,(u+\theta_E)/t\right]$ we have the Taylor expansion
\[
F(m,t)=\sum_{\ell=0}^{\infty}\frac{1}{\ell !}\frac{\P^{\ell}F}{\P x^{\ell}}(u/t,t)(m-u/t)^{\ell}.
\]
\end{lemma}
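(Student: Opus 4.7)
The plan is to use Cauchy's integral formula to bound the Taylor coefficients $\partial^{\ell}F/\partial x^{\ell}(u/t,t)/\ell!$ uniformly in $\ell$, and then sum the resulting geometric majorant on the scaled interval. Real analyticity of $F(\cdot,t)$ automatically furnishes a Taylor expansion in \emph{some} neighborhood of $u/t$; the substance of the lemma is that the neighborhood may be chosen of radius $\theta_E/t$ with $\theta_E>0$ depending only on $E_{-1}(\cdot)$.

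Fix $u\in\ss_E$. Since $E_{-1}$ is real analytic at $u\in(0,\infty)$, there exists $\rho>0$, depending only on $E_{-1}$, such that $E_{-1}$ extends holomorphically and is bounded on the complex disk $|w-u|<2\rho$. The key intermediate step is to promote this into a uniform holomorphic extension of $F(\cdot,t)$: I claim that, for all sufficiently small $t>0$, the map $z\mapsto F(z,t)$ extends from the real interval to the complex disk $D_t=\{z\in\cb:|zt-u|<\rho\}$ with $\sup_{z\in D_t}|F(z,t)|\ll 1/t$. For the Eulerian $F$ constructed in Section~\ref{sec4}, $F(m,t)$ is an elementary polynomial in $m$ plus a finite combination of sums $\sum_{k\ge 0}\log(1-e^{-t(\alpha m+\gamma+\beta k)})$; the zeros of these logarithms in the $m$-variable lie at imaginary distance at least $2\pi/(\alpha t)$ from the real axis, so the extension to $D_t$ exists for $\rho$ small, and its $O(1/t)$ size is delivered by the $m=0$ case of the hypothesised asymptotic expansion.

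Given this uniform extension, Cauchy's integral formula on the circle $|z-u/t|=\rho/t$ yields, for every $\ell\ge 0$ and every sufficiently small $t$,
\begin{equation*}
\left|\frac{\partial^{\ell}F}{\partial x^{\ell}}(u/t,t)\right|\le \frac{\ell!}{(\rho/t)^{\ell}}\sup_{|z-u/t|=\rho/t}|F(z,t)|\ll \frac{\ell!\,t^{\ell-1}}{\rho^{\ell}}.
\end{equation*}
Taking $\theta_E\in(0,\rho)$ and any $m$ with $|m-u/t|\le\theta_E/t$, the Taylor series is majorised term-by-term by
\begin{equation*}
\sum_{\ell=0}^{\infty}\frac{1}{\ell!}\left|\frac{\partial^{\ell}F}{\partial x^{\ell}}(u/t,t)\right||m-u/t|^{\ell}\ll \frac{1}{t}\sum_{\ell=0}^{\infty}(\theta_E/\rho)^{\ell},
\end{equation*}
so it converges absolutely. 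Since $F(\cdot,t)$ is real analytic at $u/t$, the sum coincides with $F(m,t)$, proving the claim; taking the minimum of the $\theta_E$'s over the finite set $\ss_E$ makes $\theta_E$ depend only on $E_{-1}$.

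The main obstacle is the uniform holomorphic extension of $F(z,t)$ to $D_t$ with its $O(1/t)$ bound. The hypothesised asymptotic expansion only controls $\partial^{\ell}F/\partial x^{\ell}(u/t,t)$ for each \emph{fixed} $\ell$ as $t\to 0^+$, which by itself is insufficient to sum the Taylor series uniformly in $\ell$. Cauchy's estimate is the standard device for bridging this gap, but it needs the \emph{a priori} analytic continuation of $F$ itself; in this paper that continuation will be supplied not by the abstract hypothesis but by the explicit lattice-of-zeros structure of the $q$-Pochhammer factors in the Eulerian general term.
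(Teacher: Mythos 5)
Your route is genuinely different from the paper's, and the difference matters. The paper never analytically continues $F$ itself. It writes the Taylor remainder in integral form,
\[
F(m,t)-\sum_{\ell=0}^{N}\frac{\P^{\ell}F}{\P x^{\ell}}(u/t,t)\frac{(m-u/t)^{\ell}}{\ell!}=\int_{u/t}^{m}\frac{\P^{N+1}F(x,t)}{\P x^{N+1}}\frac{(m-x)^{N}}{N!}\,dx,
\]
bounds the integrand via $\P^{N+1}F/\P x^{N+1}=O\bigl(t^{N}|E_{-1}^{\langle N+1\rangle}(xt)|\bigr)$, and then uses complex-analytic input only for the single fixed function $E_{-1}$: real analyticity on the compact interval $[u/2,3u/2]$ gives $\max_{|x-u/t|\le\theta/t}|E_{-1}^{\langle N+1\rangle}(xt)|\le (N+1)!\,C_u^{N+1}$, so the remainder is $\ll (C_u\theta)^{N+1}/t\to 0$ once $\theta_E<1/C_u$. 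This is why $\theta_E$ can honestly be said to depend only on $E_{-1}(\cdot)$.

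Your proof instead hinges on a holomorphic extension of $F(\cdot,t)$ itself to a disk of radius $\rho/t$ together with a uniform $O(1/t)$ bound there, and, as you concede, this is not derivable from the hypotheses of Lemma \ref{th1}: the assumed expansion controls each derivative order separately, on the real axis only, with constants that may depend on the order. You supply the extension from the explicit $q$-Pochhammer structure of the Eulerian general term. That is a genuine gap for the lemma as stated, because Lemma \ref{taylor} is formulated for the abstract $F$ of Lemma \ref{th1} and is invoked inside the proof of Lemma \ref{th1} at that level of generality; a proof valid only for the Eulerian specialization does not establish it. Note also that the $O(1/t)$ bound on the circle $|z-u/t|=\rho/t$ cannot be ``delivered by the $m=0$ case of the hypothesised asymptotic expansion,'' which speaks only of real $x$; it must be checked from the explicit series (where it does hold, since the relevant zeros sit at imaginary distance $\gg 1/t$). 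On the other hand, your diagnosis that the per-order asymptotics are by themselves insufficient to control the tail of the Taylor series is correct, and it applies to the paper too: the bound $\P^{N}F/\P x^{N}=O(t^{N-1}|E_{-1}^{\langle N\rangle}(xt)|)$ is used with an implied constant uniform in $N$, a tacit strengthening of the hypothesis. Both arguments therefore need extra uniform input; the paper's version has the advantage of requiring it only for $E_{-1}$ on a fixed compact set, where real analyticity genuinely supplies it, rather than for $F$ on a $t$-dependent complex domain.
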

\begin{proof}
First, we have for each $N\in\nb$,
\[
{\P^N F(x,t)}/{\P x^N}=O\left(t^{N-1}\left|E_{-1}^{\langle N\rangle}(xt)\right|\right)
\]
holds for each $x\gg 1/t$. Thus for $m\in\left[(u-\theta)/t,(u+\theta)/t\right]$ with $\theta\in (0,u/2)$, we find that
\begin{equation*}
\int\limits_{u/t}^{m}\frac{\P^{N+1} F(x,t)}{\P x^{N+1}}\frac{(m-x)^N}{N!}{\rm d}x\ll \frac{t^{N}\left|m-u/t\right|^{N+1}}{(N+1)!}\max_{|x-u/t|\le \theta/t}\left|E_{-1}^{\langle N+1\rangle}(xt)\right|.
\end{equation*}
We note that $E_{-1}(u)$ is real analytic function, then there exist a $C_u>0$ depends only on $u$ and $E_{-1}(\cdot)$ such that
\[\sup_{0<\theta< u/2}\left(\frac{1}{(N+1)!}\max_{|x-u/t|\le \theta/t}\left|E_{-1}^{\langle N+1\rangle}(xt)\right|\right)\le C_u^{N+1}.\]
Thus
\[
\int\limits_{u/t}^{m}\frac{\P^{N+1} F(x,t)}{\P x^{N+1}}\frac{(m-x)^N}{N!}{\rm d}x\ll \frac{C_{u}^{N+1}}{t}\theta^{N+1}.
\]
Then, by setting
\[\theta_E=\min_{u\in \ss_E}\left(\frac{u}{3+2uC_u}\right)\]
and recalling the Taylor theorem
\[
F(m,t)=\sum_{\ell=0}^{N}\frac{\P^{\ell}F}{\P x^{\ell}}(u/t,t)\frac{(m-u/t)^{\ell}}{\ell !}+\int\limits_{u/t}^{m}\frac{\P^{N+1} F(x,t)}{\P x^{N+1}}\frac{(m-x)^N}{N!}{\rm d}x,
\]
we immediately obtain the proof of this lemma.
\end{proof}

\section{The proof of the Lemma \ref{th1} and Lemma \ref{th2}}\label{sec3}
\subsection{The proof of the Lemma \ref{th1}}
Let $t>0$ sufficiently small be fixed. We have first
\begin{align}\label{mm1}
\sum_{u_m/t<m\le u_M/t}e^{F(m, t)}&=\sum_{m\in \cm_{F}}\exp\left[F(m,t)\right]+\sum_{\substack{u_m/t<m\le u_M/t\\ m\not\in\cm_{F}}}\exp\left[F(m,t)\right]\nonumber\\
&:=M_{F}+R_{F},
\end{align}
where the number set
\[\cm_{F}=\bigcup_{u\in\ss_{E}}\cm_{u}\;\mbox{with}\;\cm_{u}=\{m\in\nb: |m-u/t|\le t^{\theta_u-1}\}\]
and $\theta_u\in (1/(2k_u+1), 1/(2k_u))$ for each $u\in\ss_E$, where $k_u$ be defined as Lemma \ref{mainlemma}. It is easily seen that the above is a disjoint union as $t\rrw 0^+$.

\subsubsection{The estimate of $R_F$}
For $u\in\ss_E$ and $m\in \cm_{u}$ we note that
\begin{equation}
\frac{\P^{\ell}F}{\P x^{\ell}}(u/t,t)\sim t^{\ell-1}\left(E_{-1}^{\langle \ell\rangle}(u)+tE_{0}^{\langle \ell\rangle}(u)\right)\ll \begin{cases}t^{\ell}\quad &\ell\in[1, 2k_u-1]\\
 t^{\ell-1}&\ell\ge 2k_u
\end{cases}
\end{equation}
holds for each $\ell\in\nb_1$, thus the using of Lemma \ref{taylor} yields
\begin{align}\label{mm2}
F(m,t)&=\sum_{\ell\ge 0}\frac{1}{\ell !}\frac{\P^{\ell}F}{\P x^{\ell}}(u/t,t)(m-u/t)^{\ell}\nonumber\\
&=F(u/t,t)+\frac{1}{(2k_u)!}\frac{\P^{2k_u}F}{\P x^{2k_u}}(u/t,t)(m-u/t)^{\ell}+o(1)\nonumber\\
&=F(u/t,t)+\frac{1}{(2k_u)!}E_{-1}^{\langle 2k_u\rangle}(u)(m-u/t)^{2k_u}t^{2k_u-1}+o(1)
\end{align}
holds for $t\rrw 0^+$. Therefore from the facts that $E_{-1}^{\langle 2k_u\rangle}(u)<0$ for each $u\in\ss_E$, the monotonicity of $F(m,t)$ and \eqref{mm2} we obtain that
\begin{align}\label{mm4}
R_{F}&\ll \frac{1}{t}\sum_{u\in\ss_{E}}e^{F(u/t,t)}\exp\left(\frac{E_{-1}^{\langle 2k_u\rangle}(u)}{(2k_u)!}t^{-1+2\theta_uk_u}+o(1)\right)\nonumber\\
&\ll \sum_{u\in\ss_{E}}\exp\left(F\left(u/t,t\right)-\delta_ut^{-1+2\theta_uk_u}\right)
\end{align}
for some $\delta_u>0$ depends only on $E_{-1}(\cdot)$ and $u$.

\subsubsection{The estimate of $M_F$}
We write
\begin{align*}
T_{F}(u,t)&=\sum_{|m-u/t|\le t^{\theta_u-1}}\exp\left[F(m,t)-F\left(u/t,t\right)\right]\\
&=\sum_{|m-u/t|\le t^{\theta_u-1}}\exp\left[\sum_{\ell\ge 1}\frac{1}{\ell !}\frac{\P^{\ell}F}{\P x^{\ell}}(u/t,t)(m-u/t)^{\ell}\right].
\end{align*}
We denote by
\[f(m,t)=\sum_{\ell\ge 1}\frac{1}{\ell !}\frac{\P^{\ell}F}{\P x^{\ell}}(u/t,t)(m-u/t)^{\ell},\]
then for $x\in \cm_u$ and each $N\in\nb_1$, it is not hard to prove that
\begin{equation}\label{hoe}
\P^N f(x,t)/{\P x^N}\ll_{N}\begin{cases}
t^{N-1+\theta_u} \quad &N\le 2k_u-1\\
t^{N-1}&N\ge 2k_u
\end{cases}\ll t^{\theta_uN}
\end{equation}
by Lemma \ref{taylor}. The Euler--Maclaurin formula (see for example \cite[Theorem D.2.1]{MR1688958}) gives that for each $N\in\nb$,
\begin{align}\label{EMF}
T_F(u,t)=&\int\limits_{\lceil(u-t^{\theta_u})/t\rceil}^{\lfloor(t^{\theta_u}+u)/t\rfloor}\left(e^{f(x,t)}-\frac{(-1)^{N}}{N!}B_N(x-\lfloor x\rfloor)\frac{\P^N e^{f(x,t)}}{\P x^N}\right)\,d x\nonumber\\
&+\sum_{\ell=0}^{N}\frac{(-1)^{\ell+1}B_{\ell+1}}{\ell+1}\frac{\P^{\ell} e^{f(x,t)}}{\P x^{\ell}}\bigg|_{\lceil(u-t^{\theta_u})/t\rceil}^{\lfloor(t^{\theta_u}+u)/t\rfloor}
\end{align}
where $\lfloor \cdot\rfloor$ and $\lceil \cdot\rceil$ are the greatest integer function and the least integer function, respectively. Using \eqref{mm2} it is not hard to see that
\begin{equation}\label{0be}
e^{f(x,t)}\bigg|_{\lceil(u-t^{\theta_u})/t\rceil}^{\lfloor(t^{\theta_u}+u)/t\rfloor}\ll \exp\left(\frac{E_{-1}^{\langle 2k_u\rangle}(u)}{(2k_u)!}t^{-1+2\theta_uk_u}+o(1)\right)\ll e^{-\delta_ut^{-1+2\theta_uk_u}},
\end{equation}
where $\delta_u$ be defined as \eqref{mm4}. From Fa\`{a} di Bruno's formula
\[
\frac{\P^N e^{f(x,t)}}{\P x^N}=e^{f(x,t)}\sum_{\substack{m_1,m_2,\dots m_N\in\nb\\m_1+2m_2+\dots+Nm_N=N}}\frac{N!}{\prod_{j=1}^Nm_j!j!^{m_j}} \prod_{j=1}^{N}\left(\frac{\P^j f(x,t)}{\P x^j}\right)^{m_j}
\]
and the estimate \eqref{hoe}, we obtain that
\begin{align}\label{Nbe}
&\frac{\P^{N} e^{f(x,t)}}{\P x^{N}}\bigg|_{\lceil(u-t^{\theta_u})/t\rceil}^{\lfloor(t^{\theta_u}+u)/t\rfloor}\nonumber\\
&\qquad\ll e^{-\delta_ut^{2\theta_uk_u-1}}\sum_{\substack{m_1,m_2,\dots m_N\in\nb\\m_1+2m_2+\dots+Nm_N=N}}\prod_{j=1}^{N}\left(t^{j\theta_u}\right)^{m_j}\ll e^{-\delta_ut^{2\theta_uk_u-1}}
\end{align}
for each $N\in\nb_1$, where $\delta_u$ also be defined as \eqref{mm4}. Further more,
\begin{align}\label{EFe}
\int\limits_{\lceil(u-t^{\theta_u})/t\rceil}^{\lfloor(t^{\theta_u}+u)/t\rfloor}\left|\frac{\P^N e^{f(x,t)}}{\P x^N}\right|\,d x
&\ll \int\limits_{|x-u/t|\le t^{\theta_u-1}}\,d xe^{f(x,t)}\sum_{\substack{m_1,m_2,\dots m_N\in\nb\\m_1+2m_2+\dots+Nm_N=N}}\prod_{j=1}^{N}\left(t^{j\theta_u}\right)^{m_j}\nonumber\\
&\ll t^{\theta_uN}\int\limits_{|x-u/t|\le t^{\theta_u-1}}e^{f(x,t)}\,d x
\end{align}
holds for each $N\in\nb_1$. On the other hand, by \eqref{mm2} we see
\begin{equation}\label{lb}
\int\limits_{|x-u/t|\le t^{\theta_u-1}}e^{f(x,t)}\,dx\gg \int\limits_{|y|\le t^{\theta_u-1}}\exp\left(\frac{y^{2k_u}t^{2k_u-1}}{(2k_u)!}E_{-1}^{\langle 2k_u\rangle}(u)\right)\,dy\gg 1/t.
\end{equation}
Thus from \eqref{EMF}--\eqref{lb}, it is clear that
\begin{equation}\label{T}
T_{F}(u,t)=\left(1+o(t^p)\right)\int\limits_{|x-u/t|\le t^{\theta_u-1}}\exp\left(F(x,t)-F(u/t,t)\right)\,d x,
\end{equation}
holds for each $p\ge 0$ as $t\rrw0^+$.
\subsubsection{The final estimate}
From \eqref{mm4}, \eqref{lb} and \eqref{T}, it is obvious that for each $p\ge 0$,
\begin{align}\label{imp1}
\sum_{u_m/t<m\le u_M/t}e^{F(m, t)}&=\sum_{u\in\ss_E}\left(\left(1+o(t^p)\right)\int\limits_{|x-u/t|\le t^{\theta_u-1}}e^{F(x,t)}\,d x+O\left(e^{F\left(u/t,t\right)-\delta_ut^{-1+2\theta_uk_u}}\right)\right)\nonumber\\
&=\left(1+o(t^p)\right)\sum_{u\in\ss_E}\int\limits_{|x-u/t|\le t^{\theta_u-1}}e^{F(x,t)}\,d x
\end{align}
holds for $t\rrw 0^+$. Further more, if we denote by
\[\mathfrak{m}=(u_m/t,u_M/t]\setminus \left(\cup_{u\in\ss_E}\{x\in\rb :|x-u/t|\le t^{\theta_u-1}\}\right).\]
Then by the monotonicity of $F(m,t)$, we obtain
\begin{equation}\label{minor}
\int\limits_{\mathfrak{m}}e^{F(x,t)}\,dx\ll \frac{u_M-u_m}{t}\sum_{u\in\ss_E}e^{F(u/t,t)-\delta_ut^{-1+2\theta_uk_u}}.
\end{equation}
Combining \eqref{imp1}, \eqref{minor} and similar with \eqref{imp1}, we obtain that for each $p\ge 0$, as $t\rrw 0^+$
\[\sum_{u_m/t<m\le u_M/t}e^{F(m,t)}=\left(1+o(t^p)\right)\int\limits_{u_m/t}^{u_M/t}e^{F(x,t)}\,d x.\]
Which completes the proof of the Lemma \ref{th1}.
\subsection{The proof of the Lemma \ref{th2}}\label{pt12} It is not difficult to compute that the integral in \eqref{T} equals to
\begin{align*}
&\int\limits_{|x|\le t^{\theta_u-1}}\exp\left(\sum_{\ell\ge 1}\frac{1}{\ell !}\frac{\P^{\ell}F}{\P x^{\ell}}(u/t,t)x^{\ell}\right)\,dx\\
&\qquad=\frac{1}{V(u,t)}\int\limits_{|y|\le t^{\theta_u-1}V(u,t)}\exp\left(-y^{2k_u}+\sumstar_{k\ge 1}\lambda_{k}(u,t)y^{k}\right)\,dy\\
&\qquad=\frac{1}{V(u,t)}\int\limits_{|y|\le t^{\theta_u-1/(2k_u)}}e^{-y^{2k_u}}\exp\left(\sumstar_{k\ge 1}\lambda_{k}(u,t)y^{k}\right)\,dy+O\left(e^{-\delta_ut^{-1+2\theta_uk_u}}\right),
\end{align*}
where $*$ means that $k\neq 2k_u$ and
\begin{equation}\label{V}
V(u,t)=\left(\frac{-1}{(2k_u) !}\frac{\P^{2k_u}F}{\P x^{2k_u}}(u/t,t)\right)^{\frac{1}{2k_u}}
\end{equation}
and
\begin{equation*}
\lambda_{\ell}(u,t)=\frac{1}{\ell !V(u,t)^{\ell}}\frac{\P^{\ell}F}{\P x^{\ell}}(u/t,t),\ell \in(\nb\setminus\{2k_u\}).
\end{equation*}
Further more, we have the estimates
\begin{align*}
\frac{1}{V(u,t)}=\left(-\frac{(2k_u)!}{E_{-1}^{\langle 2k_u\rangle}(u)}\right)^{\frac{1}{2k_u}}t^{1/(2k_u)-1}
\left(1-\frac{1}{2k_u}\frac{E_{0}^{\langle 2k_u\rangle}(u)}{E_{-1}^{\langle 2k_u\rangle}(u)}t+O(t^2)\right)
\end{align*}
and
\begin{align}\label{lambdae}
\lambda_{\ell}(u,t)\sim\left(-\frac{(2k_u)!}{E_{-1}^{\langle 2k_u\rangle}(u)}\right)^{\frac{\ell}{2k_u}}t^{\ell/(2k_u)}\times
\begin{cases}E_{0}^{\langle \ell\rangle}(u)\quad &\ell\in[1,2k_u)\\
t^{-1}E_{-1}^{\langle \ell\rangle}(u)\quad &\ell\in(2k_u,\infty).
\end{cases}
\end{align}
by Newton's generalized binomial theorem. If we write
\[
\exp\left[\sumstar_{k\ge 1}\lambda_{k}(u,t)x^{k}\right]=\sum_{\ell=0}^{\infty}\kappa_{\ell}(u,t)x^{\ell},
\]
then
\begin{align}\label{kappa}
\kappa_{\ell}(u,t)&=\sum_{\substack{\ell_r\in\nb\\ \sum_{r\ge 1}^*r\ell_r=\ell}}\prod_{\substack{r\ge 1\\ r\neq 2k_u}}\frac{[\lambda_{r}(u,t)]^{\ell_r}}{\ell_r !}\nonumber\\
&=\sum_{\substack{\ell_r\in\nb\\ \sum_{r\ge 1}^*r\ell_r=\ell}}\prod_{\substack{r\ge 1\\ r\neq 2k_u}}\frac{1}{\ell_r!}\left(\frac{[(2k_u)!]^{r/(2k_u)}}{r !}\frac{\frac{\P^{r}F}{\P x^{r}}(u/t,t)}{\left|\frac{\P^{2k_u}F}{\P x^{2k_u}}(u/t,t)\right|^{r/(2k_u)}}\right)^{\ell_r}.
\end{align}
Hence the using of \eqref{lambdae} and \eqref{kappa} implies that
\begin{align*}
\kappa_{\ell}(u,t)&\ll \sum_{\substack{\ell_r\in\nb\\ \sum_{r\ge 1}^*r\ell_r=\ell}}\prod_{r=1}^{2k_u-1}t^{\frac{r\ell_r}{2k_u}}\prod_{r=2k_u+1}^{\ell}t^{\frac{r\ell_r}{2k_u}-\ell_r}\\
&\ll t^{\frac{\ell}{2k_u}}\sum_{\substack{\ell_r\in\nb\\ \sum_{r\ge 1}^*r\ell_r=\ell}}\prod_{r=2k_u+1}^{\ell}t^{-\frac{r\ell_r}{2k_u+1}}\ll t^{\frac{\ell}{2k_u(2k_u+1)}}
\end{align*}
for each $\ell\in\nb$. Thus it is not hard to show that
\begin{align*}
T_F(u,t)&=\frac{1+o(t^N)}{V(u,t)}\left(\sum_{\ell=0}^{2k_u(2k_u+1)N}\kappa_{\ell}(u,t)\int\limits_{\rb}e^{-y^{2k_u}}y^{\ell}\,dy+o\left(t^{N}\right)\right)\\
&=\frac{1}{V(u,t)}\left(\sum_{\ell=0}^{k_u(2k_u+1)N}\Gamma\left(\frac{2\ell+1}{2k_u}\right)\frac{\kappa_{2\ell}(u,t)}{k_u}+o\left(t^{N}\right)\right)
\end{align*}
holds for each $N\in\nb$. Then combining \eqref{imp1} we finish the proof of Lemma \ref{th2}.
\section{Preliminary results of $\sh(z;q)$}\label{sec4}
To prove Theorem \ref{mt}, we first need the following concepts and lemmas.
\subsection{some special functions}
Recall that the Bernoulli polynomials $B_{\ell}(x)$ are involved in the generating function is
\begin{equation}\label{BP}
\frac{ze^{zx}}{e^{z}-1}=\sum_{\ell=0}^{\infty}\frac{z^{\ell}}{\ell !}B_{\ell}(x),
\end{equation}
where $|z|<2\pi$. The Bernoulli numbers are given by
\begin{equation}\label{BN}
B_{\ell}=B_{\ell}(0)~\mbox{for}~\ell\in\nb.
\end{equation}
It is a well known fact that
\begin{equation}\label{BN1}
B_0=1, B_1=-\frac{1}{2}, B_{2\ell+1}=0~\mbox{and}~B_{2\ell}\sim (-1)^{\ell+1}\frac{2(2\ell)!}{(2\pi)^{2\ell}}~\mbox{as}~\ell\rrw\infty~ \mbox{for}~\ell\in\nb_1.
\end{equation}

The polylogarithm function $\li_{s}(z)$ is
\begin{equation}\label{PLF}
\li_{s}(z)=\sum_{k=1}^{\infty}\frac{z^k}{k^{s}}.
\end{equation}
If $s=2$, then it is called that dilogarithm function, say
\begin{equation}\label{PLF1}
\li_{2}(z)=\sum_{k=1}^{\infty}\frac{z^k}{k^2}
\end{equation}
and converges for $|z|\le 1$.
If $s=1$, then
\begin{equation*}
\li_{1}(z)=\sum_{k=1}^{\infty}\frac{z^k}{k}=-\log(1-z),
\end{equation*}
which converges for $|z|<1$. If $s$ is a non-positive integer, say $s=-r$, then the polylogarithm function
is defined recursively by
\begin{equation}\label{PLF1}
 \li_{-r}(z)=z\frac{\,d}{\,d z}\li_{1-r}(z) ~\mbox{for}~r\in\nb
\end{equation}
and $|z|<1$. Thus we have the following lemma.
\begin{lemma}\label{lm2}(See \cite[Lemma 2]{MR1703273})Let integer $n\le 2$, $a$ and $x$ be real number. Then, we have
\[
\li_{n}(ae^x)=\sum_{k=0}^{\infty}\frac{\li_{n-k}(a)}{k!}x^k
\]
holds for $|a|< 1$ and $|x|<\min(-\log |a|,\pi)$.
\end{lemma}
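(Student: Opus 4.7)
The plan is to show that both sides have the same Taylor coefficients at $x=0$ and then to verify that the resulting power series represents $g(x):=\li_n(ae^x)$ on the disk $|x|<\min(-\log|a|,\pi)$. First I would establish the uniform differentiation identity $\li_n'(z)=\li_{n-1}(z)/z$ for every integer $n$: for $n\ge 1$ it follows by termwise differentiation of the series $\li_n(z)=\sum_{k\ge 1}z^k/k^n$, and for $n\le 1$ it is a direct restatement of the recursive definition $\li_{-r}(z)=z\frac{d}{dz}\li_{1-r}(z)$. Composing with $z=ae^x$ and applying the chain rule gives $g'(x)=\li_{n-1}(ae^x)$; induction then yields $g^{(k)}(x)=\li_{n-k}(ae^x)$, so $g^{(k)}(0)=\li_{n-k}(a)$, which are exactly the coefficients appearing in the claimed expansion.

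Next I would verify that $g$ is holomorphic on the target disk so that Taylor's theorem reconstructs it there from its coefficients at $0$. Within this disk one has $|ae^x|\le|a|e^{|x|}<1$, placing $ae^x$ strictly inside the open unit disk; every $\li_m$ with $m\le 2$ is holomorphic on that disk (as a rational function with its only pole at $z=1$ when $m\le 0$, and via the absolutely convergent defining series when $m=1,2$). The secondary cap $|x|<\pi$ is a safety margin for the case $a<0$: then $ae^x=1$ occurs only at $x=-\log|a|+i\pi(2\ell+1)$, and keeping $|x|<\pi$ prevents the disk from reaching any such singularity even when $-\log|a|$ is large.

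As an independent consistency check, I would derive the identity directly from the series for $n\ge 1$. Substituting $e^{mx}=\sum_k(mx)^k/k!$ into $\li_n(ae^x)=\sum_{m\ge 1}a^me^{mx}/m^n$ and interchanging orders yields $\sum_k(x^k/k!)\sum_{m\ge 1}a^m/m^{n-k}$, with the interchange justified because the double sum $\sum_{m,k}|a|^m m^k|x|^k/(k!\,m^n)$ collapses to $\sum_m(|a|e^{|x|})^m/m^n$, which is finite precisely when $|x|<-\log|a|$.

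The main technical point is the bookkeeping when $n-k<0$: the symbol $\li_{n-k}(a)$ is no longer given by its defining series, so one must verify that $\sum_{m\ge 1}m^{r}a^m$ indeed equals $\li_{-r}(a)$ for each $r\in\nb_1$ and $|a|<1$. This follows by an easy induction on $r$ from $\li_{-r}(z)=z\frac{d}{dz}\li_{1-r}(z)$: differentiating the inductive series for $\li_{1-r}$ termwise and multiplying by $z$ reproduces the series for $\li_{-r}$. Once this identification is in hand, the two routes agree and the lemma follows.
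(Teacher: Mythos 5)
Your proof is correct and complete; note that the paper itself offers no argument for this lemma at all — it is simply quoted from McIntosh \cite[Lemma 2]{MR1703273} — so there is nothing to compare against except the original source. Your first route (the derivative identity $\li_n'(z)=\li_{n-1}(z)/z$ for all integers $n\le 2$, hence $g^{(k)}(0)=\li_{n-k}(a)$ for $g(x)=\li_n(ae^x)$, combined with holomorphy of $g$ on the disk $|x|<\min(-\log|a|,\pi)$ to upgrade the formal Taylor data to an actual convergent expansion) is sound, and your second route via interchanging the double sum $\sum_{m,k}a^m(mx)^k/(k!\,m^n)$ is essentially McIntosh's own argument; you correctly identify the one genuine bookkeeping point, namely that $\li_{-r}(a)=\sum_{m\ge1}m^ra^m$ for $r\in\nb$ and $|a|<1$ must be checked against the recursive definition \eqref{PLF1}. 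One small remark: in your framework the cap $|x|<\pi$ is entirely redundant, since $|x|<-\log|a|$ already forces $|ae^x|<1$ and hence keeps $ae^x$ away from the singularity at $1$; the $\pi$ appears in the statement because the result is also used (and proved in \cite{MR1703273}) for positive $a$, where the series route needs $e^{m x}$ expanded about a nonzero base point and the nearest complex singularity of $x\mapsto\li_n(ae^x)$ sits at distance $\pi$ off the real axis. This does not affect the validity of what you wrote, since you only claim the identity on the smaller disk.
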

We need the following asymptotic of $q$-shifted factorials which proof follows immediately from Theorem 2 of \cite{MR1703273} or Theorem 2 of \cite{MR3128410} with change of variables.
\begin{lemma}\label{mtct}Let $b>0$ and real $a/b\not\in\zb_{\le 0}$.  Then, we have
\begin{equation*}
(e^{-at};e^{-bt})_{\infty}\sim\frac{\sqrt{2\pi}t^{\frac{1}{2}-\frac{a}{b}}}{\Gamma(a/b)}\exp\left(-\frac{\pi^2}{6bt}-\sum_{\ell=1}^{\infty}\frac{b^{\ell}B_{\ell}B_{\ell+1}(a/b)}{\ell(\ell +1)!}t^{\ell}\right).
\end{equation*}
\end{lemma}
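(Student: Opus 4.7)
The plan is to compute the logarithm of the product via a Mellin--Barnes integral. First I would expand
\[
\log(e^{-at};e^{-bt})_\infty = \sum_{k=0}^{\infty}\log\bigl(1-e^{-(a+bk)t}\bigr) = -\sum_{n=1}^{\infty}\frac{1}{n}\sum_{k=0}^{\infty}e^{-n(a+bk)t},
\]
and insert the Mellin representation $e^{-x}=\frac{1}{2\pi i}\int_{(c)}\Gamma(s)x^{-s}\,ds$ valid for $c>0$. For $c>1$ the double sum converges absolutely and collapses into a product of a Riemann zeta and a Hurwitz zeta, yielding
\[
\log(e^{-at};e^{-bt})_\infty = -\frac{1}{2\pi i}\int_{(c)}\Gamma(s)\zeta(s+1)\zeta(s,a/b)(bt)^{-s}\,ds,
\]
where $\zeta(s,a/b)$ denotes the Hurwitz zeta function with parameter $a/b>0$, $a/b\notin\zb_{\le 0}$.

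The body of the proof is the standard Mellin shift: move the contour leftward to $\re s=-N-\frac{1}{2}$ for arbitrary $N\in\nb$, collecting residues. The integrand has a simple pole at $s=1$ from $\zeta(s,a/b)$ (residue $1$), a double pole at $s=0$ from $\Gamma(s)\zeta(s+1)$, and simple poles at $s=-\ell$ for every $\ell\in\nb_1$ from $\Gamma(s)$. The pole at $s=1$ yields the leading singular contribution $-\pi^2/(6bt)$. At each $s=-\ell$, combining $\mathrm{Res}_{s=-\ell}\Gamma(s)=(-1)^\ell/\ell!$ with the classical evaluations $\zeta(1-\ell)=-(-1)^\ell B_\ell/\ell$ (using the paper's convention $B_1=-1/2$) and $\zeta(-\ell,a/b)=-B_{\ell+1}(a/b)/(\ell+1)$, and noting $\ell(\ell+1)\cdot\ell!=\ell(\ell+1)!$, a direct sign computation gives exactly the coefficient $-\frac{b^\ell B_\ell B_{\ell+1}(a/b)}{\ell(\ell+1)!}t^\ell$, matching the Bernoulli sum in the exponent.

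The delicate step is the double pole at $s=0$. Here I would substitute the Laurent expansions $\Gamma(s)=s^{-1}-\gamma+O(s)$, $\zeta(s+1)=s^{-1}+\gamma+O(s)$, $(bt)^{-s}=1-s\log(bt)+O(s^2)$, together with $\zeta(0,a/b)=\frac{1}{2}-\frac{a}{b}$ and the Lerch formula $\zeta'(0,a/b)=\log\Gamma(a/b)-\frac{1}{2}\log(2\pi)$. Extracting the $s^{-1}$ coefficient of the fourfold product yields the residue
\[
\bigl(\tfrac{1}{2}-\tfrac{a}{b}\bigr)\log t+\log\Gamma(a/b)-\tfrac{1}{2}\log(2\pi),
\]
where the $\gamma$ contributions from $\Gamma$ and $\zeta$ cancel and the $\log b$ pieces combine. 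After exponentiation this contributes precisely the prefactor $\sqrt{2\pi}\,t^{1/2-a/b}/\Gamma(a/b)$.

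Finally I would bound the shifted integral along $\re s=-N-\frac{1}{2}$ using Stirling's exponential decay of $|\Gamma(s)|$ in $|\im s|$ combined with the standard polynomial growth of $\zeta(s+1)$ and $\zeta(s,a/b)$ in vertical strips; this shows the remainder is $O(t^{N+1/2})$, beating every term of the expansion. The main technical obstacle is the bookkeeping at the double pole (four Laurent factors each singular or contributing non-trivial constants that must combine correctly) and the uniform control needed to justify the contour shift, especially for the Hurwitz zeta as $\re s\to-\infty$; once these are in place, exponentiating the identity directly yields the stated asymptotic expansion.
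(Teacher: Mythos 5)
Your route is genuinely different from the paper's: the paper does not prove Lemma \ref{mtct} at all, but simply cites Theorem 2 of McIntosh \cite{MR1703273} (equivalently Theorem 2 of \cite{MR3128410}) after a change of variables, whereas you give a self-contained Mellin--Barnes derivation. The skeleton of your argument is sound, and your residue bookkeeping at $s=1$ and at $s=-\ell$, $\ell\ge 1$, is correct: it reproduces the $-\pi^2/(6bt)$ term and, since the residue of the integrand at $s=-\ell$ is $\frac{(-1)^\ell}{\ell!}\cdot\bigl(-\frac{(-1)^\ell B_\ell}{\ell}\bigr)\cdot\bigl(-\frac{B_{\ell+1}(a/b)}{\ell+1}\bigr)(bt)^{\ell}$, exactly the Bernoulli sum in the exponent.

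The gap is at the double pole $s=0$, precisely the step you flag as delicate. Writing the integrand as $\Gamma(s)\zeta(s+1)h(s)$ with $h(s)=\zeta(s,a/b)(bt)^{-s}$ and using $\Gamma(s)\zeta(s+1)=s^{-2}(1+O(s^2))$, the residue is $h'(0)=\zeta'(0,a/b)-\zeta(0,a/b)\log(bt)$. The $\log b$ does not cancel against anything --- there is no other source of $\log b$ in the integrand --- so a correct execution of your own method yields the prefactor $\sqrt{2\pi}\,(bt)^{\frac12-\frac ab}/\Gamma(a/b)$, not $\sqrt{2\pi}\,t^{\frac12-\frac ab}/\Gamma(a/b)$; the assertion that ``the $\log b$ pieces combine'' is not a computation. (There is also a sign slip: the quantity you call the residue exponentiates to $\Gamma(a/b)t^{\frac12-\frac ab}/\sqrt{2\pi}$, the reciprocal of what you claim; the overall minus sign in front of the contour integral must be applied to the whole residue, including the $\log t$ term.) The extra factor $b^{\frac12-\frac ab}$ is real and detectable: for $a=b=2$ the printed lemma gives $(e^{-2t};e^{-2t})_\infty\sim\sqrt{2\pi}\,t^{-1/2}e^{-\pi^2/(12t)+t/12}$, whereas the Dedekind-eta asymptotic of $(Q;Q)_\infty$ with $Q=e^{-2t}$ gives $\sqrt{\pi/t}\,e^{-\pi^2/(12t)+t/12}$, smaller by exactly $\sqrt{2}=b^{\frac ab-\frac12}$. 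So your method, done correctly, proves the statement with $(bt)^{\frac12-\frac ab}$ in place of $t^{\frac12-\frac ab}$; you should state that corrected version rather than force the computation to land on the printed formula. Two smaller points: your series representation $\log(1-e^{-(a+bk)t})=-\sum_n e^{-n(a+bk)t}/n$ requires $a+bk>0$ for every $k\ge0$, which fails for the negative non-integral values of $a/b$ permitted by the hypothesis (those finitely many factors must be treated separately); and the contour shift needs a polynomial vertical bound for $\zeta(s,a/b)$ in left half-planes, which follows from Hurwitz's functional equation and should be cited.
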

\subsection{Some asymptotic properties of $\sh(z;q)$}
\subsubsection{The asymptotics for the product term of $\sh(z;q)$}\label{sec41}
From Lemma \ref{mtct}, we have
\begin{align*}
&\prod_{a,b,c,d}(q^a;q^b)_{\infty}^{-\cs(a,b,c,d)}\\
&\qquad\sim\prod_{a,b,c,d}\left(\frac{\Gamma(a/b)t^{\frac{a}{b}-\frac{1}{2}}}{\sqrt{2\pi}}\exp\left(\frac{\pi^2}{6bt}+\sum_{\ell=1}^{\infty}\frac{b^{\ell}B_{\ell}B_{\ell+1}(a/b)}{\ell(\ell +1)!}t^{\ell}\right)\right)^{\cs(a,b,c,d)}\\
&\qquad=t^{\sum_{a,b,c,d}\left(\frac{a}{b}-\frac{1}{2}\right)\cs(a,b,c,d)}\prod_{a,b,c,d}\left(\Gamma(a/b)/\sqrt{2\pi}\right)^{\cs(a,b,c,d)}\\
&~~~\qquad\times\exp\left(\frac{1}{t}\sum_{a,b,c,d}\frac{\pi^2\cs(a,b,c,d)}{6b}+\sum_{\ell=1}^{\infty}t^{\ell}\sum_{a,b,c,d}\frac{B_{\ell}\cs(a,b,c,d)b^{\ell}}{\ell(\ell+1)!}B_{\ell+1}\left(\frac{a}{b}\right)\right).
\end{align*}
Namely, we have
\begin{equation*}
\prod_{a,b,c,d}(q^a;q^b)_{\infty}^{-\cs(a,b,c,d)}\sim C_{\ch}t^{B_{\ch}}\exp\left(\frac{A_{\ch}}{t}+\sum_{\ell=1}^{\infty}A_{\ell}t^{\ell}\right)
\end{equation*}
with
\[A_{\ch}=\sum_{a,b,c,d}\frac{\pi^2\cs(a,b,c,d)}{6b}, A_{\ell}=\sum_{a,b,c,d}\frac{B_{\ell}\cs(a,b,c,d)b^{\ell}}{\ell(\ell+1)!}B_{\ell+1}\left(\frac{a}{b}\right)~ \ell\in\nb_1,\]
\[B_{\ch}=\sum_{a,b,c,d}\left(\frac{a}{b}-\frac{1}{2}\right)\cs(a,b,c,d)\;\mbox{and}\;C_{\ch}=\prod_{a,b,c,d}\left(\Gamma(a/b)/\sqrt{2\pi}\right)^{\cs(a,b,c,d)}.\]
\subsubsection{Some asymptotic properties of $\ch(z;q)$}
We first denote by
\begin{equation*}
\cf(m,t)=\log\left(\left(\prod_{a,b,c,d}\left(q^{bcm+a+bd};q^{b}\right)_{\infty}^{\cs(a,b,c,d)}\right)q^{Am^2+Bm}z^m\right),
\end{equation*}
then
\begin{equation}\label{sum}
\ch(z;q)=\sum_{m\in\nb}\exp\left(\cf(m,t)\right).
\end{equation}
It is not hard to compute that
\begin{align}\label{cf}
\cf(m,t)&=\sum_{a,b,c,d}\cs(a,b,c,d)\sum_{\ell=0}^{\infty}\log\left(1-q^{bcm+a+bd+b\ell}\right)-Am^2t-Bmt+mv\nonumber\\
&=mv-Am^2t-Bmt-\sum_{a,b,c,d}\cs(a,b,c,d)\sum_{\ell=0}^{\infty}\sum_{k=1}^{\infty}\frac{q^{k(bcm+a+bd)+kb\ell}}{k}\nonumber\\
&=mv-Am^2t-Bmt-\sum_{a,b,c,d}\cs(a,b,c,d)\sum_{k=1}^{\infty}\frac{e^{-k(bcm+a+bd)t}}{k(1-e^{-kb t})}
\end{align}
or, we have
\begin{equation}\label{cf1}
\cf(m,t)=mv-Am^2t-Bmt+\sum_{\alpha,\beta,\gamma}S_{\alpha\beta\gamma}\sum_{k=1}^{\infty}\frac{e^{-k(\alpha m+\gamma)t}}{k(1-e^{-k\beta t})}
\end{equation}
by \eqref{mqs1}.
The series $\cf(m,t)$ converges uniformly on compact subsets of $t>0$ and $m\ge 0$. Moreover, it is easily seen that the function
$\cf(x, t)$ with $(x,t)\in (\rb_{>0})^{2}$ is a real analytic function. Under the following lemma, we can apply Lemma \ref{th1} to prove Theorem \ref{mt}.
\begin{lemma}\label{fasy} Let $\cf(x,t)$ be defined as \eqref{cf} and let the real number $\delta>0$. Then, for all $X\ge t^{-\delta}$, we have
\[
\frac{\P^N\cf(X,t)}{\P X^{N}}= t^N\left(\sum_{\ell=-1}^{M}H_{\ell}^{\langle N\rangle}(Xt)t^{\ell}+o(t^{\delta M/2})\right)
\]
as $t\rrw 0^+$ for each $N, M\in \nb$, where $H_{-1}(u)$ be defined by \eqref{Hm1} and
\begin{equation*}
H_{m}(u)=\begin{cases}
\sum\limits_{a,b,c,d}\li_{1}(e^{-bc u})\left(d+\frac{a}{b}-\frac{1}{2}\right)\cs(a,b,c,d)-Bu &m=0\\
(-1)^{m}\sum\limits_{a,b,c,d}\frac{b^m\cs(a,b,c,d)}{(m+1)!}B_{m+1}\left(\frac{a+bd}{b}\right)\li_{1-m}(e^{-bc u})&m\in\nb_1,
\end{cases}
\end{equation*}
or
\begin{equation*}
H_{m}(u)=\begin{cases}
-\sum\limits_{\alpha,\beta,\gamma}\li_{1}(e^{-\alpha u})\left(\frac{\gamma}{\beta}-\frac{1}{2}\right)S_{\alpha\beta\gamma}-Bu &m=0\\
(-1)^{m-1}\sum\limits_{\alpha,\beta,\gamma}\frac{\beta^mS_{\alpha\beta\gamma}}{(m+1)!}B_{m+1}\left(\gamma/\beta\right)\li_{1-m}(e^{-\alpha u})&m\in\nb_1
\end{cases}
\end{equation*}
by comparing \eqref{cf} and \eqref{cf1}. In particular, we have for each $X\gg 1/t$
\begin{equation}\label{fasy1}
\frac{\P^N\cf(X,t)}{\P X^{N}}= t^N\left(\sum_{\ell=-1}^{M}H_{\ell}^{\langle N\rangle}(Xt)t^{\ell}+O(t^{ M+1})\right)
\end{equation}
as $t\rrw 0^+$ for each $N\in \nb$.
\end{lemma}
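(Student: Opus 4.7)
The plan is to compute the asymptotic expansion by directly substituting two well-known expansions into the series \eqref{cf} for $\cf(X,t)$: the Bernoulli generating function for $(1-e^{-kbt})^{-1}$ and the polylogarithm Taylor expansion provided by Lemma \ref{lm2}. First I would write
\[
\cf(X,t)=Xv-AX^2t-BXt-\sum_{a,b,c,d}\cs(a,b,c,d)\sum_{k\ge 1}\frac{e^{-k(bcX+a+bd)t}}{k(1-e^{-kbt})}
\]
and substitute $\frac{1}{1-e^{-y}}=\sum_{\ell\ge 0}\frac{(-1)^\ell B_\ell}{\ell!}y^{\ell-1}$ (convergent on $0<|y|<2\pi$) with $y=kbt$. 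After interchanging the sums --- justified by absolute convergence, with $k$ outside the Bernoulli radius absorbed via rapid $e^{-kT}$ decay --- the $k$-sum evaluates to a polylogarithm and produces
\[
\sum_{k\ge 1}\frac{e^{-kT}}{k(1-e^{-kbt})}=\sum_{\ell\ge 0}\frac{(-1)^\ell B_\ell b^{\ell-1}t^{\ell-1}}{\ell!}\,\li_{2-\ell}(e^{-T}),
\]
where $T=bcu+(a+bd)t$ and $u=Xt$.

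Next I would apply Lemma \ref{lm2} to expand each $\li_{2-\ell}(e^{-T})$ in powers of $t$ around the basepoint $e^{-bcu}$:
\[
\li_{2-\ell}(e^{-bcu-(a+bd)t})=\sum_{j\ge 0}\frac{(-(a+bd))^j}{j!}\,\li_{2-\ell-j}(e^{-bcu})\,t^j,
\]
which is valid whenever $|(a+bd)t|<\min(bcu,\pi)$; the hypothesis $X\ge t^{-\delta}$ gives $u\ge t^{1-\delta}$, so this holds for all sufficiently small $t$. Collecting by powers of $t$, the coefficient of $t^s$ in the inner sum comes from pairs $(\ell,j)$ with $\ell+j=s+1$; summing over $\ell$ via the Bernoulli polynomial identity $\sum_{\ell=0}^{r}\binom{r}{\ell}B_\ell x^{r-\ell}=B_r(x)$ with $x=(a+bd)/b$ produces precisely the stated formulas for $H_{-1}(u)$, $H_0(u)$ and $H_m(u)$ for $m\ge 1$; the polynomial contributions $Xv-AX^2t=uv/t-Au^2/t$ and $-BXt=-Bu$ go into $H_{-1}/t$ and $H_0$ respectively. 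The alternative representation in terms of $S_{\alpha\beta\gamma}$ follows from matching \eqref{mqs0} with \eqref{mqs1}.

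For the derivative version, I would differentiate term-by-term $N$ times in $X$. Since everything depends on $X$ only through $u=Xt$, one has $\P_X=t\,\P_u$, so $\P_X^N$ produces the factor $t^N$ in front and turns each $H_\ell(u)$ into $H_\ell^{\langle N\rangle}(u)$; the finitely many polynomial pieces in $X$ contribute only for $N\le 2$ and match the corresponding derivatives of $vu-Au^2-Bu$. This yields the formal expansion claimed.

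The main obstacle is the uniform remainder bound in the regime $X\ge t^{-\delta}$, where $u=Xt$ can be as small as $t^{1-\delta}$ and each $\li_{1-\ell}(e^{-bcu})$ blows up like $(\ell-1)!(bcu)^{-\ell}$ as $u\to 0^+$. The key estimate is $H_\ell^{\langle N\rangle}(u)\ll_{N,\ell} u^{-\ell-N}$ uniformly for small $u$, which gives
\[
t^N\,H_\ell^{\langle N\rangle}(Xt)\,t^\ell\ll X^{-\ell-N}\le t^{\delta(\ell+N)}.
\]
Summing the geometric tail for $\ell>M$ together with the companion tail of the $j$-expansion of each polylogarithm (dominated similarly by ratios of size $t^\delta$) yields the claimed $o(t^{\delta M/2})$ error after absorbing the fixed powers coming from $N$. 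For the stronger bound in the regime $X\gg 1/t$, $u$ stays bounded away from $0$, all derivatives $H_\ell^{\langle N\rangle}(u)$ are uniformly bounded on the resulting compact $u$-interval, and the standard Taylor-remainder estimate immediately gives $O(t^{M+1})$, yielding \eqref{fasy1}.
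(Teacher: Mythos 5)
Your proposal follows essentially the same route as the paper's proof: expand $1/(1-e^{-kbt})$ via the Bernoulli generating function, identify the resulting $k$-sums as polylogarithms $\li_{2-\ell}$, Taylor-expand those via Lemma \ref{lm2} around $e^{-bcXt}$, recombine the coefficients into Bernoulli polynomials $B_{m+1}((a+bd)/b)$, and control the remainder in the regime $X\ge t^{-\delta}$ through bounds of the type $\li_{1-m}(e^{-bcXt})\ll (Xt)^{-m}$. The one caution is that your displayed identity for $\sum_{k}e^{-kT}/(k(1-e^{-kbt}))$ cannot hold as a convergent sum over $\ell$ (the $B_\ell$ grow factorially and $\li_{2-\ell}(e^{-T})$ blows up like $(\ell-2)!\,T^{1-\ell}$, so the $\ell$-series diverges); as in the paper one must first truncate the $k$-sum (say at $k\le t^{2\delta/3-1}$, the discarded terms being superexponentially small since $e^{-kbcXt}\le e^{-bct^{-\delta/3}}$ there) and then truncate the Bernoulli expansion at $\ell=M$ with an explicit $O((kbt)^{M+1})$ Taylor remainder --- which is exactly the bookkeeping your ``geometric tail'' remark gestures at, so the gap is one of phrasing rather than substance.
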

\begin{proof} We shall prove the case $N=0$, the proof of the cases $N\ge 1$ is similar. It is not hard to see that
\begin{align*}
\cf(X,t)&=Xv-AX^2t-BXt\\
&-\sum_{a,b,c,d}\frac{\cs(a,b,c,d)}{bt}\sum_{k\le t^{2\delta/3-1}}\frac{e^{-k(bcX+a+bd)t}}{k^2}\frac{-kb t}{e^{-kb t}-1}+R_1(X,t),
\end{align*}
where
\[
R_1(X,t)\ll \sum_{a,b,c,d}\sum_{k> t^{2\delta/3-1}}\frac{e^{-kbcXt}}{kt}\ll e^{-t^{-\delta/4}}
\]
by using the condition $X\ge t^{-\delta}$. By \eqref{BN} we have
\[
\sum_{k\le t^{2\delta/3-1}}\frac{e^{-k(bcX+a+bd)t}}{k^2}\frac{-kb t}{e^{-kb t}-1}=\sum_{k\le t^{2\delta/3-1}}\frac{e^{-k(bcX+a+bd)t}}{k^2}\sum_{\ell=0}^{\infty}\frac{B_{\ell}}{\ell !}(-kb t)^{\ell}
\]
Then use the asymptotic formula of $B_{\ell}$ (see \eqref{BN1}), for each $M\in\nb$
\begin{equation*}
\sum_{k\le t^{2\delta/3-1}}\frac{e^{-k(bcX+a+bd)t}}{k^{2}}\frac{-kb t}{e^{-kb t}-1}=\sum_{\ell=0}^{M}\frac{B_{\ell}(-b t)^{\ell}}{\ell !}\sum_{k\le t^{2\delta/3-1}}\frac{e^{-k(bcX+a+bd)t}}{k^{2-\ell}}+R_2(X,t),
\end{equation*}
where as $t\rrw 0^+$,
\[
R_2(X,t)\ll\sum_{k\le t^{2\delta/3-1}}\frac{e^{-kbcXt}}{k^{2}}\sum_{\ell> M}(kb t)^{\ell}\ll |bt^{2\delta/3}|^{M+1}\ll t^{\frac{\delta}{2}( M+1)}.
\]
Using \eqref{PLF}, it is not difficult seen that
\[
\sum_{k\le t^{2\delta/3-1}}\frac{e^{-k(bcX+a+bd)t}}{k^{2}}\frac{-kb t}{e^{-kb t}-1}=\sum_{\ell=0}^{M}\frac{B_{\ell}(-b t)^{\ell}}{\ell !}\li_{2-\ell}\left(e^{-(bcX+a+bd)t}\right)+O\left(t^{\frac{\delta}{2}( M+1)}\right).
\]
By Lemma \ref{lm2}
\[\frac{\,d^M\li_{2-\ell}(e^{-bcXt}e^{x})}{\,d x^M}=\li_{2-\ell-M}\left(e^{-bcXt}e^x\right).\]
Note the fact that
\begin{align*}
\li_{1-m}\left(e^{-bcXt}\right)\ll \left(\sum_{k\le t^{2\delta/3-1}}+\sum_{k> t^{2\delta/3-1}}\right)k^{m-1}e^{-kbcXt}\ll t^{(2\delta/3-1)m}
\end{align*}
holds for $m\le |\log t|$, then Taylor Theorem implies that
\[
\li_{2-\ell}\left(e^{-(bcX+a+bd)t}\right)=\sum_{k=0}^{M}\frac{\li_{2-\ell-k}\left(e^{-bcXt}\right)}{k!}(-(a+bd)t)^k+R_3(X,t),
\]
where
\begin{align*}
R_3(X,t)&=\int\limits_{0}^{-(a+bd)t}\li_{2-\ell-M-1}\left(e^{-bcXt}e^x\right)\frac{(-(a+bd)t-x)^M}{M!}\,dx\\
&\ll \frac{((a+bd)t)^{M+1}}{(M+1)!}\li_{1-\ell-M}\left(e^{-bcXt}\right)\ll t^{ 2M\delta/3}t^{(2\delta/3-1)\ell}.
\end{align*}
for $\ell\le M$ as $t\rrw 0^+$. Therefore, we obtain that
\begin{align*}
&\sum_{k\le t^{2\delta/3-1}}\frac{e^{-k(bcX+a+bd)t}}{k^{2}}\frac{-kb t}{e^{-kb t}-1}\\
&\qquad =\ssum_{0\le k,\ell\le M}\frac{B_{\ell}(-b t)^{\ell}}{\ell !}\frac{\li_{2-\ell-k}\left(e^{-bcXt}\right)}{k!}(-(a+bd)t)^k+O\left(t^{\frac{\delta}{2}( M+1)}\right).
\end{align*}
Thus we obtain that as $t\rrw 0^+$,
\begin{align*}
\cf(X,t)=&Xv-AX^2t-BXt+O\left(t^{\frac{\delta}{2}( M+1)}+\sum_{M<k\le 2M}t^{k-1+(2\delta/3-1)(k-1)}\right)\\
&-\sum_{m=0}^{M}(-t)^{m}\sum_{a,b,c,d}\frac{\li_{2-m}(e^{-bc Xt})}{t}\sum_{0\le \ell\le m}\frac{B_{\ell}b^{\ell}(a+bd)^{m-\ell}\cs(a,b,c,d)}{b\ell !(m-\ell)!}.
\end{align*}
Namely,
\begin{align*}
\cf(X,t)=&\frac{1}{t}\left(v(Xt)-A(Xt)^2-\sum_{a,b,c,d}\li_{2}(e^{-bc Xt})b^{-1}\cs(a,b,c,d)\right)\\
&+\sum_{a,b,c,d}\li_{1}(e^{-bc Xt})\left(B_{0}(a+bd)+B_{1}b\right)\frac{\cs(a,b,c,d)}{b}-B(Xt)\\
&+\sum_{m=1}^{M}t^{m}(-1)^{m}\sum_{a,b,c,d}\li_{1-m}(e^{-bc Xt})\frac{b^m\cs(a,b,c,d)}{(m+1)!}B_{m+1}\left(\frac{a+bd}{b}\right)+o\left(t^{\delta M/2}\right)
\end{align*}
for each $M\in\nb$. Finally, note that if $X\gg 1/t$ then $\li_{1-m}(e^{-bc Xt})\ll 1$, thus we immediately get the proof of \eqref{fasy1}. Which completes the proof of the lemma.
\end{proof}

\section{The proof of the main theorem}\label{sec5}
Let us denote by $\ss_H$ the set of all local maximum points of $H_{-1}(x)$ with $x\in \rb_{>0}$ and let $t>0$ sufficiently small.  We first have $\ss_H$ is a finite point set or an empty set by the definition of $H_{-1}(u)$ in Lemma \ref{fasy}. We rewritten \eqref{sum} as
\begin{equation*}
\ch(z;q)=\left(\sum_{m\le f_m/t}+\sum_{f_m/t<m\le f_M/t}+\sum_{m>f_M/t}\right)e^{\cf(m,t)}:=\Sigma_1+\Sigma_2+\Sigma_3,
\end{equation*}
where $f_m$ and $f_M$ be defined as follows:
\begin{enumerate}\label{condition}
\item[(\Rmnum{1}).]
If $\ss_H$ is nonempty, then set
\[f_m=(\min_{u\in\ss_H}u)-1/|\log t|\; \mbox{and}\; f_M=(\max_{u\in\ss_H}u)+1/|\log t|.\]
\item[(\Rmnum{2}).]
If $\ss_H$ is an empty set, then set $f_m=f_M=1$.
\end{enumerate}
We also write by
\begin{equation*}
\mathcal{I}_{\ch}(z;q)=\int\limits_{0}^{\infty}e^{\cf(x,t)}\,d x=\left(\int\limits_{0}^{f_m/t}+\int\limits_{f_m/t}^{f_M/t}+\int\limits_{f_M/t}^{\infty}\right)e^{\cf(x,t)}\,dx:=\mathcal{I}_{1}+\mathcal{I}_{2}+\mathcal{I}_{3}.
\end{equation*}
Note that if $\ss_H$ is nonempty, then by Lemma \ref{fasy} and Lemma \ref{th1}, as $t\rrw 0^+$
\begin{equation}\label{Sigma2}
\Sigma_2=(1+o(t^p))\mathcal{I}_2
\end{equation}
holds for each $p\ge 0$. Thus we just need consider the estimates of $\Sigma_1, \mathcal{I}_1, \Sigma_3$ and $\mathcal{I}_3$.
\subsection{The treat of $\Sigma_1$ and $\mathcal{I}_1$}\label{sec51}
\begin{lemma}\label{l51}
Let $\cf(m,t)$ be defined as \eqref{cf1} and $f_m$ be defined as above. Then, we have as $t\rrw 0^+$
\[\Sigma_1\ll \exp\left(\cf(f_m/t,t)+|\log t|^3\right)\;\mbox{and}\;\mathcal{I}_1\ll \exp\left(\cf(f_m/t,t)+|\log t|^3\right).\]
\end{lemma}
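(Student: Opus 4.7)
My plan is to establish a uniform pointwise upper bound of the form $\cf(x,t)\le \cf(f_m/t,t)+O(|\log t|^2)$ for $x\in [0,f_m/t]$, from which both $\Sigma_1$ and $\mathcal{I}_1$ will follow by a trivial counting/integration argument over an interval of length $O(1/t)$. The decisive input is strict monotonicity of $H_{-1}$ on $[0,f_m]$. Since $H_{-1}$ is real analytic on $(0,\infty)$ (being a finite combination of $vu-Au^2$ and terms of the form $\li_2(e^{-\alpha_j u})$), its derivative has only isolated zeros. In Case~(I) ($\ss_H\ne\emptyset$), I would set $u_1=\min\{u:u\in\ss_H\}$; if $H_{-1}'$ vanished on $(0,u_1)$ before any local maximum, the sign-change pattern of $H_{-1}'$ would force a local maximum strictly below $u_1$, contradicting minimality. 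Combined with the hypothesis that $H_{-1}'>0$ on $(0,\varepsilon)$, this yields $H_{-1}'>0$ on $(0,u_1)$, and hence strict monotonicity of $H_{-1}$ on $[0,f_m]\subset[0,u_1)$. In Case~(II) ($\ss_H=\emptyset$, $f_m=1$), the same sign-tracking argument shows $H_{-1}$ is non-decreasing on $[0,\infty)$, and since $H_{-1}'$ has isolated zeros, $H_{-1}$ is in fact strictly increasing on $[0,1]$.

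For the uniform pointwise bound I would split $[0,f_m/t]$ at $x=t^{-1/2}$. On the bulk range $x\in[t^{-1/2},f_m/t]$, Lemma~\ref{fasy} with $N=0$, $M=1$, $\delta=1/2$ gives $\cf(x,t)=H_{-1}(xt)/t+H_0(xt)+o(t^{1/4})$. Monotonicity yields $H_{-1}(xt)\le H_{-1}(f_m)$, and the explicit form of $H_0$ (which involves $\li_1(e^{-\alpha xt})=-\log(1-e^{-\alpha xt})$) gives $|H_0(xt)|=O(|\log t|)$ uniformly for $xt\in[t^{1/2},f_m]$, so $\cf(x,t)\le \cf(f_m/t,t)+O(|\log t|)$. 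On the tail range $x\in[0,t^{-1/2}]$, Lemma~\ref{fasy} is not available, so I would work directly from the representation $\cf(x,t)=xv-(Ax^2+Bx)t+\sum_{a,b,c,d}\cs(a,b,c,d)\log(q^{bcx+a+bd};q^b)_\infty$ and apply Lemma~\ref{mtct} to each $q$-Pochhammer factor. Since $(bcx+a+bd)/b$ is positive and $O(t^{-1/2})$, aggregating the resulting $-\pi^2/(6bt)$, $\log\Gamma$, and linear-in-$x$ contributions yields $\cf(x,t)=H_{-1}(0)/t+O(t^{-1/2}|\log t|)$. By strict monotonicity, $H_{-1}(f_m)-H_{-1}(0)\ge\eta>0$ uniformly in $t$, so the $1/t$-scale gap overwhelms the correction $O(t^{-1/2}|\log t|)$, producing $\cf(x,t)\le \cf(f_m/t,t)$ throughout the tail range.

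Combining, $\cf(x,t)\le \cf(f_m/t,t)+O(|\log t|)$ uniformly on $[0,f_m/t]$. Since both the number of integer points in $[0,f_m/t]$ and the length of the interval are $O(1/t)$,
\[\Sigma_1\ll (1/t)\exp\!\bigl(\cf(f_m/t,t)+O(|\log t|)\bigr)\ll \exp\!\bigl(\cf(f_m/t,t)+|\log t|^3\bigr),\]
and $\mathcal{I}_1$ satisfies the identical bound by the same reasoning. The main obstacle is the tail-range estimate: Lemma~\ref{fasy} ceases to apply and Lemma~\ref{mtct} must be wielded uniformly as $(bcx+a+bd)/b$ grows to order $t^{-1/2}$, which requires careful bookkeeping of the $\log\Gamma$-type contributions. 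The saving grace is that strict monotonicity of $H_{-1}$ produces the $1/t$-scale gap that absorbs any sub-$(1/t)$ correction, so the $|\log t|^3$ in the statement is very generous and no sharp estimate is ultimately required.
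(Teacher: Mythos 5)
Your overall strategy coincides with the paper's: reduce both bounds to a uniform pointwise estimate $\cf(x,t)\le \cf(f_m/t,t)+O(|\log t|^{2})$ on $[0,f_m/t]$ via monotonicity of $H_{-1}$, then sum/integrate trivially over an interval of length $O(1/t)$. (Your explicit sign-tracking argument that $H_{-1}'>0$ up to $\min\ss_H$ is in fact more careful than the paper, which asserts $H_{-1}(xt)\le H_{-1}(f_m)$ with no justification beyond citing the hypothesis of Theorem \ref{mt}.) The two proofs diverge only in how the range where Lemma \ref{fasy} is unavailable gets handled. The paper never splits the interval: it differentiates the series \eqref{cf1} term by term and shows directly that $\P\cf(x,t)/\P x=H_{-1}'(xt)+O(1/x+t)$ for all $x\ge 1$ (and is $O(|\log t|)$ on $[0,2]$), then integrates from $X$ up to a point $Y\gg 1/t$ where Lemma \ref{fasy} applies; the $O(1/x)$ error integrates to $O(|\log t|)$, giving $\cf(X,t)=H_{-1}(Xt)/t+O(|\log t|)$ uniformly down to $X=1$. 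This buys a clean bound on the entire range with no appeal to $q$-Pochhammer asymptotics. Your tail treatment instead invokes Lemma \ref{mtct} with the parameter $a$ replaced by $bcx+a+bd$ of size $t^{-1/2}$, which is outside the stated scope of that lemma (it is an asymptotic expansion for fixed $a,b$ as $t\to 0^+$, not a uniform estimate in a regime where $a/b\to\infty$); as you acknowledge, making this uniform requires reproving the expansion, e.g.\ by Euler--Maclaurin. The gap is benign for the final conclusion --- the $\eta/t$ separation between $H_{-1}(0)$ and $H_{-1}(f_m)$ absorbs any $o(1/t)$ error, and in Case (II) your appeal to analyticity correctly rules out $H_{-1}$ being constant --- but the paper's derivative-then-integrate route is the cleaner way to cross the region $1\le x\le t^{-1/2}$, and you may prefer to adopt it rather than patch Lemma \ref{mtct}.
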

\begin{proof}
First of all, let $t\rrw 0^+$ and $x\ge 0$. It is not difficult to compute that
\begin{align*}
\frac{\P \cf(x,t)}{\P x}&=v-2Axt-Bt-t\sum_{\alpha,\beta,\gamma}\alpha S_{\alpha\beta\gamma}\sum_{k=1}^{\infty}\frac{e^{-k(\alpha x+\gamma-\beta)t}}{e^{k\beta t}-1}\\
&=v-2Axt-t\sum_{\alpha,\beta,\gamma}\alpha S_{\alpha\beta\gamma}\sum_{k=1}^{\left\lfloor\frac{1}{2\beta t}\right\rfloor}\frac{e^{-k(\alpha x+\gamma)t}}{1-e^{-k\beta t}}+O\left(\exp\left(-\frac{\alpha}{2\beta}x\right)\right)\\
&=v-2Axt-t\sum_{\alpha,\beta,\gamma}\alpha S_{\alpha\beta\gamma}\sum_{k=1}^{\left\lfloor\frac{1}{2\beta t}\right\rfloor}\frac{e^{-k(\alpha x+\gamma)t}}{k\beta t}+O\left(\frac{1}{1+x}+t\right)\\
&=v-2Axt-\sum_{\alpha,\beta,\gamma}\frac{\alpha}{\beta} S_{\alpha\beta\gamma}\sum_{k=1}^{\infty}\frac{e^{-k\alpha( x+\gamma/\alpha)t}}{k}+O\left(\frac{1}{1+x}+t\right).
\end{align*}
Therefore it is not hard seen that
\begin{equation}\label{1dr}
\frac{\P \cf(x,t)}{\P x}=\begin{cases}H_{-1}'(xt)+O(1/x+t)\quad &x\ge 1\\
O(|\log t|)& x\in[0,2].
\end{cases}
\end{equation}
Thus for each $X\in [1, f_m/t]$ and $Y\gg 1/t$, we have
\begin{align*}
\cf(X,t)&=\cf(Y,t)-\int\limits_{X}^{Y}\left(H_{-1}'(xt)+O(1/x+t)\right)\,dx\\
&=\cf(Y,t)-H_{-1}(Yt)/t+H_{-1}(Xt)/t+O\left(|\log t|\right).
\end{align*}
Using Lemma \ref{fasy} we obtain that
\[\cf(X,t)=H_{-1}(Xt)/t+O(|\log t|).\]
Therefore as $t\rrw 0^+$, we have
\[\exp\left(\cf(x,t)\right)\ll \exp\left(H_{-1}(xt)/t+|\log t|^2\right)\]
Moreover, from the assumption of Theorem \ref{mt} and Lemma \ref{fasy} we find that
$$H_{-1}(xt)\le H_{-1}(f_m)=\cf(f_m/t,t)+O(1)$$
for each $x\in [0, f_m/t]$. By \eqref{1dr} we have $\cf(X,t)\ll \cf(1,t)+|\log t|$ for $X\in[0,1]$. Hence
\begin{align*}
\Sigma_1\ll\sum_{m\le f_m/t}\exp\left(H_{-1}(mt)/t+O(|\log t|)\right)\ll\exp\left(\cf(f_m/t,t)+|\log t|^3\right)
\end{align*}
and
\begin{align*}
\mathcal{I}_1&\ll\int\limits_{0}^{f_m/t}\exp\left(H_{-1}(xt)/t+O(|\log t|)\right)\,dx\ll \exp\left(\cf(f_m/t,t)+|\log t|^3\right).
\end{align*}
Which completes the proof of the lemma.
\end{proof}
\subsection{The estimate of $\Sigma_3$ and $\mathcal{I}_3$}\label{sec52}
If $A=v=0$ then
\[\ch(z;q)=\ch(1;q)=\sum_{m\in\nb}\frac{q^{Bm}}{\prod_{\alpha,\beta,\gamma}(q^{\alpha m+\gamma};q^{\beta})_{\infty}^{S_{\alpha\beta\gamma}}}\]
and for $x\gg 1/t$,
\begin{equation}\label{0dr}
\cf(x,t)=\frac{1}{t}\sum_{\alpha}\li_2(e^{-\alpha xt})\sum_{\alpha,\beta,\gamma} \beta^{-1}S_{\alpha\beta\gamma}-Bxt+O\left(\sum_{\alpha}e^{-\alpha xt}\right)+O(t)
\end{equation}
and
\begin{equation}\label{1dr}
\frac{\P\cf(x,t)}{\P x}=-\sum_{\alpha}\alpha\li_1(e^{-\alpha xt})\sum_{\alpha,\beta,\gamma} \beta^{-1}S_{\alpha\beta\gamma}-Bt+O\left(\sum_{\alpha}te^{-\alpha xt}\right)+O(t^{2})
\end{equation}
by Lemma \ref{fasy}. The assumption of Theorem \ref{mt} implies that
$$H_{-1}(u)=\sum_{\alpha}\li_2(e^{-\alpha u})\sum_{\beta,\gamma} \beta^{-1}S_{\alpha\beta\gamma}\not\equiv 0.$$
and hence
$\sum_{\beta,\gamma} \beta^{-1}S_{\alpha\beta\gamma}\neq 0$
for some $\alpha$. Therefore we can rewritten \eqref{1dr} as
\begin{equation*}
\frac{\P\cf(x,t)}{\P x}=-\sum_{k=1}^H\alpha_k\log(1-e^{-\alpha_k xt})f(\alpha_k)-Bt+O\left(te^{-\alpha_1xt}+t^{2}\right).
\end{equation*}
Furthermore,
\begin{equation}\label{1dr2}
\frac{\P\cf(x,t)}{\P x}=e^{-\alpha_1 xt}\alpha_1f(\alpha_1)\left(1+O\left(e^{-\alpha_1xt}+e^{-(\alpha_2-\alpha_1)xt}\right)\right)-Bt(1+O(t)).
\end{equation}
This yields if $f(\alpha_1)>0$ then $\P \cf(x,t)/\P x=0$ has only one solution $X_{\ch}$ for $x>C/t$ with $C>0$ be sufficiently large depends only on $\ch(1;\cdot)$. Moreover, it is clear to compute that
\begin{equation*}
X_{\ch}=\frac{1}{\alpha_1 t}\left(\log\left(\frac{1}{t}\right)+O(1)\right).
\end{equation*}
In this case, we have the following lemma.
\begin{lemma}\label{l52} If $f(\alpha_1)>0$ then as $t\rrw 0^+$
\begin{equation*}
\Sigma_3=\left(1+o(t^p)\right)\mathcal{I}_3+O\left(\exp\left(\cf(f_M/t,t)+2|\log t|\right)\right)
\end{equation*}
holds  for each $p\ge 0$.
\end{lemma}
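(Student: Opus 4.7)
The plan is to transport the Euler--Maclaurin strategy of Lemma \ref{th1} to the ``floating'' critical point $X_\ch$, which in the regime $A=v=0$ lies at scale $|\log t|/t$, well to the right of every point of $\ss_H$. Since $f(\alpha_1)>0$, equation \eqref{1dr2} forces $\P\cf/\P x$ to vanish at a unique $X_\ch=(1/(\alpha_1 t))(\log(1/t)+O(1))$ in $(f_M/t,\infty)$, so $e^{\cf(x,t)}$ is unimodal on this half-line, increasing on $(f_M/t,X_\ch)$ and decreasing on $(X_\ch,\infty)$.

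First I would quantify the geometry of $\cf$ near $X_\ch$. A direct computation using Lemma \ref{fasy} gives $\P^k\cf(X_\ch,t)/\P x^k\asymp t^k$ for every $k\ge 2$, so the natural rescaling is $y=t(x-X_\ch)$, under which
\[
\cf(X_\ch+y/t,t)=\cf(X_\ch,t)+G(y)+o(1),\qquad G(y):=-\frac{B}{\alpha_1}\bigl(e^{-\alpha_1 y}-1+\alpha_1 y\bigr),
\]
uniformly on compact sets of $y$. Thus the peak has width $\asymp 1/t$ rather than the usual $1/\sqrt t$, and $e^{G(y)}$ decays exponentially on the left and with linearly-growing exponent on the right. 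Fix $p\ge 0$ and pick a window $\mathcal W=[X_\ch-W,X_\ch+W]$ with $W=C'(p)|\log t|/t$ large enough that both $G(-C'|\log t|)\sim-(B/\alpha_1)t^{-\alpha_1 C'}$ and $G(C'|\log t|)\sim-BC'|\log t|$ yield $e^{\cf(X_\ch\pm W,t)}=O(t^{p}e^{\cf(X_\ch,t)})$. Decompose $\Sigma_3=\Sigma_3^{\mathrm L}+\Sigma_3^{\mathrm C}+\Sigma_3^{\mathrm R}$ and $\mathcal I_3=\mathcal I_3^{\mathrm L}+\mathcal I_3^{\mathrm C}+\mathcal I_3^{\mathrm R}$ according as the index lies in $(f_M/t,X_\ch-W]$, $\mathcal W$, or $(X_\ch+W,\infty)$.

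On $\mathcal W$ I would apply the Euler--Maclaurin formula \eqref{EMF} to $g=e^{\cf}$. By Fa\`a di Bruno and $\P^k\cf/\P x^k\asymp t^k$, all derivatives of $g$ at the window boundary are $O(t^p e^{\cf(X_\ch,t)})$, which together with the size $\mathcal I_3^{\mathrm C}\asymp e^{\cf(X_\ch,t)}/t$ (via Laplace applied to $G$) gives $\Sigma_3^{\mathrm C}=(1+o(t^p))\mathcal I_3^{\mathrm C}$. On $(X_\ch+W,\infty)$, the monotone decay of $e^\cf$ together with the tail bound $e^\cf\ll e^{-Bxt}$ makes both $\Sigma_3^{\mathrm R}$ and $\mathcal I_3^{\mathrm R}$ equal to $O(t^p\mathcal I_3)$. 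On $(f_M/t,X_\ch-W]$, where $\cf$ is increasing, the monotonicity inequality $|\Sigma-\mathcal I|\le 2\max e^\cf$ splits the contribution into a large-$x$ part (between some fixed threshold and $X_\ch-W$) of size $O(t^p\mathcal I_3)$, and a small-$x$ part near $f_M/t$ bounded by the stated $O(\exp(\cf(f_M/t,t)+2|\log t|))$; the factor $e^{2|\log t|}=t^{-2}$ absorbs the $|\log t|^2$ accumulated from multiplying the length $\asymp|\log t|/t$ of this sub-window by the pointwise bound $e^\cf\le e^{\cf(f_M/t,t)+O(|\log t|)}$ established by the argument in Lemma \ref{l51}.

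The hardest step I expect is verifying that the Euler--Maclaurin remainder on $\mathcal W$ is genuinely smaller than $t^p e^{\cf(X_\ch,t)}$ for every $p$: the critical point is ``weakly degenerate'' in the sense that $\P^k\cf/\P x^k$ at $X_\ch$ is of size $\asymp t^k$ simultaneously for all $k\ge 2$, so no finite-order Taylor truncation of $\cf$ around $X_\ch$ is uniformly small on $\mathcal W$. The resolution is to rely on the exponential suppression of $g$ and all its derivatives at the window boundary, which is stronger than any polynomial in $t$, to kill boundary and remainder terms uniformly in the order of the Euler--Maclaurin expansion, rather than on a Gaussian-type argument that would need $\cf''$ to be of size $O(1)$ in $t$.
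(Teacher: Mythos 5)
Your overall strategy is the same as the paper's: isolate the ``floating'' maximum $X_{\ch}\asymp \log(1/t)/(\alpha_1 t)$, run Euler--Maclaurin there, and dispose of the gap $(f_M/t,\,\cdot\,]$ by the endpoint-times-length bound for a function with a single interior minimum (your parenthetical ``where $\cf$ is increasing'' is not quite right when $\ss_H\neq\emptyset$, but the unimodal version of the bound is what the paper uses and is what you need). Your rescaled profile $G(y)=-(B/\alpha_1)(e^{-\alpha_1 y}-1+\alpha_1 y)$ and the width-$1/t$, non-Gaussian nature of the peak are correctly identified. The one substantive difference is the choice of Euler--Maclaurin window, and it matters: the paper takes the window to be the entire half-line $[\lceil A_h|\log t|/t\rceil,\infty)$ with $A_h=1/(2+2\alpha_1)$, i.e.\ its left endpoint sits well to the \emph{left} of $X_\ch$, at a point where $e^{-\alpha_1 xt}=t^{\alpha_1 A_h}$ with $0<\alpha_1A_h<1/2$. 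This buys two things simultaneously: $e^{\cf}\ll\exp(-f(\alpha_1)t^{-1+\alpha_1 A_h})\ll e^{-1/\sqrt t}$ at the left endpoint (so all boundary terms are super-polynomially small), and $\P^j\cf/\P x^j\ll t^{jA_h}$ \emph{uniformly on the whole window}, so Fa\`a di Bruno gives $\P^Ne^{\cf}/\P x^N\ll t^{A_hN}e^{\cf}$ pointwise and the order-$N$ remainder integral is $\ll t^{A_hN}\mathcal I_3$, beating any $t^p$ by taking $N$ large. There is no right boundary. This completely dissolves the step you flag as hardest.

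That flagged step is, in your set-up, a genuine unresolved point rather than a routine verification. The Euler--Maclaurin remainder is an integral of $B_N(\{x\})\,\P^Ne^{\cf}/\P x^N$ over the \emph{whole} window, not only a boundary contribution, and on the left half of your window $[X_\ch-W,X_\ch]$ the derivatives are only $\P^j\cf/\P x^j\ll t^{\,j-\alpha_1 s}$ at $x=X_\ch-s|\log t|/t$, which for $s$ of order $C'(p)$ is no longer $O(t^{cj})$; you must trade the double-exponential smallness $e^{\cf}\ll e^{\cf(X_\ch,t)}\exp(-(B/\alpha_1)t^{-\alpha_1 s})$ against the polynomially-in-$1/t$ large derivative products, pointwise in $s$, before integrating. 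This can be pushed through, but it is exactly the estimate you have not written down. A second, easily repaired defect: with $W=C'(p)|\log t|/t$ and $C'>1/\alpha_1$ your symmetric window extends past $f_M/t$ and even past $0$ on the left, breaking the decomposition; the left half-width only ever needs to be $\varepsilon|\log t|/t$ for a small fixed $\varepsilon$, since $e^{G(-\varepsilon|\log t|)}$ is already super-polynomially small. The cleanest repair for both issues is simply to adopt the paper's asymmetric window $[A_h|\log t|/t,\infty)$.
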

\begin{proof}
The proof of this lemma is similar with the proof of \eqref{T}. We first give the estimates of the derivatives of $\cf(x,t)$ for $x>A_h|\log t|/t$ with $A_h=1/(2+2\alpha_1)$. From \eqref{0dr} we have
\begin{align*}
\cf(x,t)&=-\frac{f(\alpha_1)}{t}e^{-\alpha_1 xt}\left(1+O\left(e^{-\alpha_1 xt}+e^{-(\alpha_2-\alpha_1) xt}\right)\right)-Bxt+O(e^{-\alpha_1xt})\\
&=-\frac{f(\alpha_1)}{t}e^{-\alpha_1 xt}\left(1+o(1)\right)-Bxt.
\end{align*}
From \eqref{1dr2} we have
\[
\frac{\P\cf(x,t)}{\P x}\ll e^{-\alpha_1 xt}+t\ll t^{A_h}.
\]
From Lemma \ref{fasy} we have for each $N\ge 2$,
\[
\frac{\P^N\cf(x,t)}{\P x^N}\ll t^{N-1}\ll t^{A_hN}.\]
Thus similar with \eqref{0be} and \eqref{Nbe}, for each $N\in\nb$ we have
\begin{align}\label{bee1}
\frac{\P^N e^{\cf(x,t)}}{\P x^N}\bigg|_{\lceil A_h |\log t|/t\rceil}^{\infty}&\ll e^{\cf(\lceil A_h |\log t|/t\rceil,t)}\ll \exp\left(-\frac{f(\alpha_1)}{t}e^{-\alpha_1 \lceil A_h |\log t|/t\rceil t}\left(1+o(1)\right)\right)\nonumber\\
&\ll \exp\left(-f(\alpha_1)t^{-1+\alpha_1A_h}\left(1+o(1)\right)\right)\ll\exp\left(-1/\sqrt{t}\right).
\end{align}
Similar with \eqref{EFe}, we have for each $N\in\nb_1$,
\[
\frac{\P^N e^{\cf(x,t)}}{\P x^N}\ll e^{\cf(x,t)}\sum_{\substack{m_1,m_2,\dots m_N\in\nb\\m_1+2m_2+\dots+Nm_N=N}}\prod_{j=1}^{N}\left(t^{jA_h}\right)^{m_j}\ll t^{A_hN}e^{\cf(x,t)}.
\]
Further more, it is not not difficult compute that
\begin{align}\label{IE}
\int\limits_{\lceil A_h |\log t|/t\rceil}^{\infty}e^{\cf(x,t)}\,dx&=\int\limits_{\lceil A_h |\log t|/t\rceil}^{\infty}\exp\left(-Bxt-\frac{f(\alpha_1)}{t}e^{-\alpha_1xt}\left(1+o(1)\right)\right)\,dx\nonumber\\
&=\frac{1}{Bt}\int\limits_{0}^{t^{A_hB}(1+o(1))}\exp\left(-\frac{f(\alpha_1)}{t}y^{\alpha_1/B}(1+o(1))\right)\,dy\asymp t^{\alpha_1/B-1}.
\end{align}
Thus similar with \eqref{T} and the using of Euler--Maclaurin formula yields that
\begin{equation}\label{SI3}
\sum_{m> \lceil A_h |\log t|/t\rceil}e^{ \cf(m,t)}=\left(1+o(t^p)\right)\int\limits_{\lceil A_h |\log t|/t\rceil}^{\infty}e^{\cf(x,t)}\,dx
\end{equation}
holds for each $p\ge 0$.
On the other hand, it is not difficult seen that $\cf(x,t)$ has an unique minimal point on $x\in[f_M/t,A_h|\log t|/t]$ if $\ss_H$ is nonempty or $\cf(x,t)$ is an increasing function if $\ss_H$ is empty. Thus
\begin{align}\label{SIE}
\sum_{f_M/t<m\le \lceil A_h |\log t|/t\rceil}e^{ \cf(m,t)}&\ll t^{-2}\exp\left(\cf(f_M/t,t)\right)+t^{-2}\exp\left(\cf(\lceil A_h |\log t|/t\rceil,t)\right)\nonumber\\
&\ll \exp\left(\cf(f_M/t,t)+2|\log t|\right) +\exp\left(-1/\sqrt{t}\right)
\end{align}
and
\begin{align}\label{RREE}
\mathcal{I}_3-\int\limits_{\lceil A_h |\log t|/t\rceil}^{\infty}e^{\cf(x,t)}\,dx&\ll t^{-2}\left(\exp\left(\cf(f_M/t,t)\right)+\exp\left(\cf(\lceil A_h |\log t|/t\rceil,t)\right)\right)\nonumber\\
&\ll \exp\left(\cf(f_M/t,t)+2|\log t|\right) +\exp\left(-1/\sqrt{t}\right)
\end{align}
by \eqref{bee1}.
Combining \eqref{SI3}, \eqref{IE}, \eqref{SIE} and \eqref{RREE} we immediately obtain the proof of the lemma.
\end{proof}

We have the following estimate for $\Sigma_3$ and $\mathcal{I}_3$.
\begin{lemma}\label{l53}
Let $\cf(m,t)$ be defined as \eqref{cf} and $f_M$ be defined as above. If $A>0$ or $v<0$ or $f(\alpha_1)<0$, then as $t\rrw 0^+$,
\[
\Sigma_3\ll \exp\left(\cf(f_M/t,t)+4|\log t|\right) \;\mbox{and}\; \mathcal{I}_3\ll \exp\left(\cf(f_M/t,t)+4|\log t|\right).
\]
\end{lemma}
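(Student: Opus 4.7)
The strategy mirrors the proof of Lemma \ref{l51}, but working outward from $f_M/t$ to infinity rather than downward from $f_m/t$ to $0$. The derivative estimate $\frac{\P\cf}{\P x}(x,t) = H_{-1}'(xt) + O(1/x + t)$ for $x\ge 1$, already derived inside Lemma \ref{l51}, is still the starting point. Integrating it from $f_M/t$ up to any $Y \gg 1/t$ and then applying Lemma \ref{fasy} at $Y$ to handle $\cf(Y,t)$ produces the uniform representation
\[\cf(X,t) = H_{-1}(Xt)/t + O(|\log t|), \qquad X \ge f_M/t,\]
exactly as in Lemma \ref{l51}.

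The key point is then to show that $H_{-1}(u) \le H_{-1}(f_M)$ for $u \ge f_M$ in each of the three hypotheses. When $A > 0$, the quadratic $-Au^2$ eventually dominates and $H_{-1}(u) \to -\infty$; when $A=0$ and $v<0$, the linear $vu$ term dominates and again $H_{-1}(u) \to -\infty$; when $A=v=0$ and $f(\alpha_1) < 0$, the constraint on $(A,B,v)$ forces $B>0$, and although $H_{-1}(u) \to 0^+$ only, the next-order term $H_0(u) \approx -Bu$ supplied by Lemma \ref{fasy} must be pulled in because it is this piece that ultimately drives the decay of $\cf(x,t)$. Analyticity of $H_{-1}$ confines its critical points to a discrete set, and the cushion $1/|\log t|$ in $f_M = \max\ss_H + 1/|\log t|$ guarantees, for $t$ small enough, that no critical point lies in $(\max\ss_H, f_M]$, so $H_{-1}$ is monotonically decreasing on $[f_M,\infty)$.

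With these ingredients, I split $\Sigma_3$ and $\mathcal{I}_3$ at a threshold $X_0 = C/t$ with $C$ a sufficiently large constant. On $[f_M/t, X_0]$ the range has length $O(1/t)$ and the representation above, combined with monotonicity of $H_{-1}$, bounds every term by $e^{\cf(f_M/t,t)+O(|\log t|)}$, so this portion contributes at most $\exp(\cf(f_M/t,t) + 3|\log t|)$. On $[X_0,\infty)$ the decay coming from $-Ax^2 t$, from $vxt$, or from $-Bxt$ (according to the case) beats any polynomial factor, and an Euler--Maclaurin argument in the spirit of \eqref{SI3}--\eqref{SIE} shows that the tail is much smaller than $\exp(\cf(f_M/t,t))$. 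Summing the two contributions gives the claimed estimate, and the same argument applied verbatim to $\mathcal{I}_3$ yields the parallel bound.

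The main obstacle is the borderline case $A=v=0$, $f(\alpha_1) < 0$: since $H_{-1}(u) \to 0$ rather than $-\infty$, a bound based on $H_{-1}$ alone cannot give decay of $\cf(x,t)$, and one must carry the asymptotic through to the $H_0$-level to harvest the decay from $-Bxt$. A secondary nuisance is ruling out spurious local maxima of $H_{-1}$ just past $\max\ss_H$, which is handled by the $1/|\log t|$ cushion together with analyticity of $H_{-1}$.
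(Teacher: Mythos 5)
Your proof follows the same overall strategy as the paper's: show that $\cf$ (or equivalently $H_{-1}$) decreases past $f_M$, then split the range at a threshold and bound the middle portion by counting terms and the far tail by the decay supplied by $-Ax^2t$, $vx$, or $-Bxt$. The paper, however, works directly with the sign of $\partial\cf/\partial x$: it combines the explicit derivative expansion (\eqref{1dr2} when $f(\alpha_1)<0$, the earlier display in Lemma \ref{l51} when $A>0$ or $v<0$) with Lemma \ref{mainlemma} and the definition of $f_M$ to conclude $\partial\cf/\partial x<0$ on all of $(f_M/t,\infty)$, and it cuts at $1/t^4$ so that the middle range has $\ll t^{-4}$ terms (whence the constant $4$) while the far tail is bounded crudely by $\cf(x,t)<xv-Ax^2t-Bxt+O(e^{-1/t^2})$ read off directly from the series \eqref{cf1}, giving a contribution $\ll e^{-1/t^2}$ that is swallowed because $\cf(f_M/t,t)\asymp 1/t$. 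Your route through the representation $\cf(X,t)=H_{-1}(Xt)/t+O(|\log t|)$ and the monotonicity of $H_{-1}$ is a legitimate variant (indeed for $X\asymp 1/t$ the error is even $O(1)$), but two of your sentences need tightening. First, "no critical point lies in $(\max\ss_H,f_M]$" is not by itself what you need: the assertion that $H_{-1}$ decreases on all of $[f_M,\infty)$ rests on $\ss_H$ containing \emph{all} local maxima, so that a local minimum past $\max\ss_H$ followed by an increase would manufacture a forbidden further local maximum (this works in every case, including $f(\alpha_1)<0$ where $H_{-1}\to 0^+$). Second, invoking an Euler--Maclaurin argument "in the spirit of \eqref{SI3}--\eqref{SIE}" for the tail is misplaced: those displays are used to extract asymptotics, not upper bounds, and with your cutoff $C/t$ the correction to $-Bxt$ is still $O(e^{-\alpha_1 C}/t)$ rather than $o(1)$, so one must either take $C$ large relative to $H_{-1}(\max\ss_H)$ or push the cutoff further out as the paper does; the paper's elementary estimate from \eqref{cf1} is simpler and exactly what the $1/t^4$ threshold is designed to make trivial.
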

\begin{proof}
If $x \ge 1/t^4$ then as $t\rrw 0^+$,
\begin{align}\label{mm3}
\cf(x,t)&=xv-Ax^2t-Bxt-\sum_{\alpha,\beta,\gamma}S_{\alpha\beta\gamma}\sum_{k=1}^{\infty}\frac{e^{-k(\alpha x+\gamma )t}}{k(e^{k\beta t}-1)}\nonumber\\
&<xv- Ax^2t-Bxt+\sum_{\alpha,\beta,\gamma}|S_{\alpha\beta\gamma}|\sum_{k=1}^{\infty}\frac{e^{-k\alpha xt}}{k^2\beta t}\nonumber\\
&=xv- Ax^2t-Bxt+O(e^{-1/t^2}).
\end{align}
If $f(\alpha_1)<0$, then from \eqref{1dr2} we have $\P \cf(x,t)/\P x<0$ for $x>C_1/t$ with $C_1>0$ be sufficiently large depends only on $\ch(1;\cdot)$. On the other hand, by \eqref{1dr} if $x\gg 1/t$ then
\[
\frac{\P \cf(x,t)}{\P x}=v-2Axt+\sum_{\alpha}\alpha \log(1-e^{-\alpha xt})\sum_{\beta,\gamma}\beta^{-1}S_{\alpha\beta\gamma}+O\left(1/x+t\right).
\]
Thus $A>0$ or $v<0$ implies that ${\P \cf(x,t)}/{\P x}<0$ holds for $x>C_2/t$ for some $C_2>0$ sufficiently large depends only on $\ch(z;\cdot)$. Therefore under the condition of the lemma, the definition of $f_M$ and the estimate of $r_u(t)$ in Lemma \ref{mainlemma} for $\cf(x,t)$ implies that $\P \cf(x,t)/\P x<0$ for all $x>f_M/t$. Hence
\begin{align*}
\Sigma_3=&\sum_{f_M/t<m\le 1/t^4}e^{\cf(m,t)}+\sum_{m> 1/t^4}e^{\cf(m,t)}\\
&\ll \exp\left(\cf(f_M/t,t)+4|\log t|\right)+\sum_{m\ge 1/t^4}e^{mv- Am^2t-Bmt}\\
&\ll \exp\left(\cf(f_M/t,t)+4|\log t|\right)+e^{-1/t^2}.
\end{align*}
Moreover, Lemma \ref{fasy} implies that $\cf(x,t)\ll 1/t$ for all $x\ge 1/t$, hence
\[\Sigma_3 \ll \exp\left(\cf(f_M/t,t)+4|\log t|\right).\]
Now, the estimate for $\mathcal{I}_3$ is easy to establish. Thus we complete the proof of the lemma.
\end{proof}
\subsection{The final estimate for $\ch(z;q)$}\label{sec53}
From Lemma \ref{taylor}, we  have for each $u\in\ss_H$, as $t\rrw 0^+$,
\begin{align*}
\cf(u/t\pm 1/(t|\log t|),t)&=\cf(u/t,t)+\frac{1}{(2k_{u})!}\frac{\P^{2k_{u}}\cf}{\P x^{2k_{u}}}\left(u/t,t\right)\frac{1}{t|\log t|^{2k_{u}}}\left(1+o(1)\right)\\
&< \cf(u/t,t)-2/\sqrt{t}.
\end{align*}
Therefore if $\ss_H$ is nonempty then
\begin{equation*}
\mathcal{I}_2\gg \sum_{u\in\ss_H}e^{\cf(u/t,t)}/t
\end{equation*}
by Lemma \ref{th2}. Therefore from Lemma \ref{l51} we have for each $p\ge 0$, as $t\rrw 0^+$,
\begin{equation*}
\Sigma_1\ll t^p\mathcal{I}_2,~\mathcal{I}_1\ll t^p\mathcal{I}_2.
\end{equation*}
From Lemma \ref{l52} and Lemma \ref{l53} we have:
Let $f(\alpha_1)$ be defined as \eqref{f1}. If $A=v=0$ and $f(\alpha_1)>0$ then
\begin{equation*}
\Sigma_3=\left(1+o(t^p)\right)\mathcal{I}_3+o(t^p\mathcal{I}_2).
\end{equation*}
If $A>0$ or $v<0$ or $f(\alpha_1)<0$, then
\begin{equation*}
\Sigma_3\ll t^p\mathcal{I}_2 \;\mbox{and}\; \mathcal{I}_3\ll t^p\mathcal{I}_2.
\end{equation*}
Thus together with \eqref{Sigma2}, we obtain that for each $p\ge 0$, as $t\rrw 0^+$,
\[\ch(z;q)=(1+o(t^p))\mathcal{I}_{\ch}(z;q).\]
If $\ss_H$ is empty then $f(\alpha_1)>0$, thus from Lemma \ref{l52} we have as $t\rrw 0^+$,
\begin{align*}
\ch(z;q)&=\Sigma_1+\Sigma_3+\mathcal{I}_1-\mathcal{I}_1\\
&=O\left(\exp\left(\cf(1/t,t)+|\log t|^3\right)\right)+\left(1+o(t^p)\right)\mathcal{I}_3+\mathcal{I}_1\\
&=\left(1+o(t^p)\right)\mathcal{I}_{\ch}(z;q)+o(t^p\mathcal{I}_3)=\left(1+o(t^p)\right)\mathcal{I}_{\ch}(z;q)
\end{align*}
holds  for each $p\ge 0$.

Which completes the proof of the Theorem \ref{mt}.
\subsection{The proof of Theorem \ref{mc}}
From above discussion, it is not difficult seen that for each $p\ge 0$, as $t\rrw 0^+$
\begin{equation}\label{aspa}
\ch(z;q)=(1+o(t^p))\left(\Sigma_2+\int\limits_{\log|\log t|/t}^{\infty}e^{\cf(x,t)}\,dx\right),
\end{equation}
where if $\ss_H$ is empty then $\Sigma_2=0$. If $\ss_H$ is nonempty then we can obtain the full asymptotics by Lemma \ref{th2}. Thus we just
need to consider the integration part of above. On the other hand, combining Subsection \ref{sec41}, the full asymptotic expansion for $\sh(z;q)$ immediately follows the full asymptotic expansion of $\ch(z;q)$.
We first give the results of $\Sigma_2$.
\subsubsection{Asymptotic expansion for $\Sigma_2$} Let denote by $m_u$ the minimum positive integer such that
$H_{-1}^{\langle 2m_u\rangle}(u)\neq 0$.
\begin{equation}\label{Vh}
V_{\ch}(u,t)=\left(\frac{-1}{(2m_u) !}\frac{\P^{2m_u}\cf}{\P x^{2m_u}}(u/t,t)\right)^{\frac{1}{2m_u}},
\end{equation}
\begin{align}\label{mu}
\mu_{\ell}(u,t)=\sum_{\substack{\ell_r\in\nb\\ \sum_{r\ge 1}^*r\ell_r=\ell}}\prodstar_{r\ge 1}\frac{1}{\ell_r!}\left(\frac{[(2m_u)!]^{r/(2m_u)}}{r !}\frac{\frac{\P^{r}\cf}{\P x^{r}}(u/t,t)}{\left|\frac{\P^{2m_u}\cf}{\P x^{2m_u}}(u/t,t)\right|^{r/(2m_u)}}\right)^{\ell_r}.
\end{align}
where $*$ means that $r\neq 2m_u$ and from Subsection \ref{pt12} the estimate
\begin{align*}
\mu_{\ell}(u,t)\ll t^{\frac{\ell}{2m_u(2m_u+1)}}
\end{align*}
holds for each $\ell\in\nb$.
From Lemma \ref{th2} we have the full asymptotic expansion in $\mu_{2\ell}(u,t)$ of the form
\begin{equation}\label{formula}
\Sigma_2\sim \sum_{
u\in\ss_H}\frac{\exp(\cf(u/t,t))}{V_{\ch}(u,t)}\sum_{\ell=0}^{\infty}\Gamma\left(\frac{2\ell+1}{2m_u}\right)\frac{\mu_{2\ell}(u,t)}{m_u},
\end{equation}
\subsubsection{Asymptotic expansion for the integration part}
Now we consider the integral of \eqref{aspa}. In fact, we shall consider the case is when $A=v=0$ with $f(\alpha_1)>0$. We have first
\begin{align*}
I_{\ch}(t):&=\int\limits_{\log|\log t|/t}^{\infty}e^{\cf(x,t)}\,dx\\
&=\int\limits_{\log|\log t|/t}^{\infty}\exp\left(-Bxt+\sum_{k=1}^{\infty}\sum_{\ell=1}^{H}e^{-k\alpha_{\ell} xt}\sum_{\beta,\gamma}\frac{S_{\alpha_{\ell}\beta\gamma}e^{-k\gamma t}}{k(1-e^{-\beta kt})}\right)\,d x
\end{align*}
by \eqref{cf1}. We denote by
\[h_{k,\alpha_{\ell}}(t):=-\sum_{\beta,\gamma}\frac{S_{\alpha_{\ell}\beta\gamma}e^{-k\gamma t}}{k(1-e^{-\beta kt})}.\]
Then for each $k\in\nb_1$ and $\alpha_{\ell}$, it is obvious that
\[h_{k,\alpha_{\ell}}(t)=\frac{1}{k^2t}\left(f(\alpha_{\ell})-\sum_{r\ge 1}\frac{(-kt)^r}{r!}\sum_{s=0}^r\binom{r}{s}\frac{B_s}{\beta}\sum_{\beta,\gamma}\beta^s\gamma^{r-s}S_{\alpha_{\ell}\beta\gamma}\right).\]
Further more, we obtain that
\begin{align*}
I_{\ch}(t)
=&\frac{1}{Bt}\int\limits_{0}^{1/|\log t|^B}\exp\left(-\sum_{k=1}^{\infty}\sum_{\ell=1}^{H}y^{k\alpha_{\ell}/B} h_{k,\alpha_{\ell}}(t)\right)\,d y\\
=&\frac{1+o(t^p)}{Bt[h_{1,\alpha_1}(t)]^{\frac{B}{\alpha_1}}}\int\limits_{0}^{1/(t|\log t|)}u^{\frac{B}{\alpha_1}-1}e^{-u}\exp\left(-\ssum_{\substack{(\ell,k)\in\nb_1^2\setminus\{(1,1)\}\\ \ell\le H}}\frac{u^{\frac{k\alpha_{\ell}}{\alpha_1}}h_{k,\alpha_{\ell}}(t)}{[h_{1,\alpha_1}(t)]^{\frac{k\alpha_{\ell}}{\alpha_1}}}\right)\,d y
\end{align*}
for each $p\ge 0$. Moreover, for each $k\in\nb$, we have
\begin{equation}\label{kpe}
\frac{h_{k,\alpha_{\ell}}(t)}{[h_{1,\alpha_1}(t)]^{\frac{k\alpha_{\ell}}{\alpha_1}}}=\frac{f(\alpha_{\ell})}{k^2[f(\alpha_1)]^{\frac{k\alpha_{\ell}}{\alpha_1}}}t^{\frac{k\alpha_{\ell}}{\alpha_1}-1}\left(1+\sum_{r\ge 1}b_r(\alpha_1,\alpha_{\ell},k)t^r\right)
\end{equation}
holds for some $b_r(\alpha_1,\alpha_{\ell},k)\in \rb (r\ge 1)$ as $t\rrw 0^+$. In odder to give the asymptotic expansion, we shall define the countable set
\begin{equation}\label{Lambda}
\Lambda(\ch):=\left\{\lambda_1+\sum_{\ell=2}^{H}\sum_{k=1}^{\infty}\lambda_{\ell k}(k\alpha_{\ell}/\alpha_1-1): \lambda_1,\lambda_{\ell k}\in\nb, 2\le \ell\le H, k\ge 1\right\}.
\end{equation}
Then, we formally have
\begin{equation}\label{Kappa}
\exp\left(-\ssum_{\substack{(\ell,k)\in\nb_1^2\setminus\{(1,1)\}\\ \ell\le H}}\frac{u^{k\alpha_{\ell}/\alpha_1 }h_{k,\alpha_{\ell}}(t)}{[h_{1,\alpha_1}(t)]^{k\alpha_{\ell}/\alpha_1}}\right)=\sum_{\lambda\in\Lambda(\ch)}\kappa_{\lambda}(t)u^{\lambda}
\end{equation}
with some $\kappa_{\lambda}(t)\in\rb$.
Moreover, for each $\lambda\in\Lambda(\ch)$, by \eqref{kpe} and \eqref{Kappa}
\begin{align*}
\kappa_{\lambda}(t)&\ll \sum_{\substack{\ell_1+\dots+\ell_H=\lambda\\ \ell_j\in \Lambda(\ch), j=1,\dots,H }}\left(\sum_{\substack{\ell_{1r}\ge 2\\ 2\ell_{12}+3\ell_{13}+\dots=\ell_1}}t^{\sum_{r\ge 2}(r-1)\ell_{1r}}\right)\prod_{j=2}^H\left(\sum_{\substack{\ell_{jr}\ge 1\\ \ell_{j1}+2\ell_{j2}+\dots=\ell_j}}t^{\sum_{r\ge 1}\left(\frac{\alpha_{j}}{\alpha_1}r-1\right)\ell_{jr}}\right)\\
&\ll \sum_{\substack{\ell_1+\dots+\ell_H=\lambda\\ \ell_j\in\Lambda(\ch), j=1,\dots,H }}t^{\ell_1/2}\prod_{j=2}^Ht^{\left(\frac{\alpha_{j}}{\alpha_1}-1\right)\ell_j}=\sum_{\substack{\ell_1+\dots+\ell_H=\lambda\\ \ell_j\in\Lambda(\ch), j=1,\dots,H }}t^{\lambda/2+\sum_{j=2}^H\left(\alpha_j/\alpha_1-3/2\right)\ell_j}.
\end{align*}
Thus it is not hard to see that
\[\kappa_{\lambda}(t)\ll t^{\lambda/2}+t^{(\alpha_2/\alpha_1-1)\lambda}.\]
We choose
$$\alpha(h)=\frac{1}{4+2/(\alpha_2/\alpha_1-1)},$$
then it not difficult seen that
\begin{align*}
I_{\ch}(t)
=&\frac{1+o(t^p)}{\alpha_1t[h_{1,\alpha_1}(t)]^{\frac{B}{\alpha_1}}}\int\limits_{0}^{t^{-\alpha(h)}}u^{\frac{B}{\alpha_1}-1}e^{-u}\exp\left(-\ssum_{\substack{(\ell,k)\in\nb_1^2\setminus\{(1,1)\}\\ \ell\le H}}\frac{u^{\frac{k\alpha_{\ell}}{\alpha_1}}h_{k,\alpha_{\ell}}(t)}{[h_{1,\alpha_1}(t)]^{\frac{k\alpha_{\ell}}{\alpha_1}}}\right)\,d u
\end{align*}
and furthermore,
\begin{equation}\label{formula2}
I_{\ch}(t)=\frac{1+o(t^p)}{V(t)}\int\limits_{0}^{t^{-\alpha(h)}}y^{\frac{B}{\alpha_1}-1}e^{-y}\sum_{\lambda\in\Lambda(\ch)}\kappa_{\lambda}(t)y^{\lambda}\,d y\sim \sum_{\lambda\in\Lambda(\ch)}\frac{\kappa_{\lambda}(t)}{V(t)}\Gamma\left(\frac{B}{\alpha_1}+\lambda\right).
\end{equation}
where
\[V(t)=\alpha_1t[h_{1,\alpha_1}(t)]^{\frac{B}{\alpha_1}}.\]

\section{Some applications of the main results}\label{sec6}
\subsection{Asymtotics of the basic hypergeometric series}
We write ${\bf a}=(a_1,a_2,...,a_r)\in \rb_{>0}^{r} $, ${\bf b}=(b_1,b_2,...,b_s)\in\rb_{>0}^{s}$ with $\ell:=s-r>0$. Assuming $v\in \rb$ and $t>0$ and considering the following basic hypergeometric series:
\begin{equation*}
_{r}\phi_{s}({\bf a,b};t,v):=\sum_{k=0}^{\infty}\frac{(e^{-ta_{1}},\dotsc,e^{-ta_{r}};e^{-t})_{k}}{(e^{-tb_{1}},\dotsc,e^{-tb_{s}};e^{-t})_{k}}e^{-\ell t\binom{k}{2}}e^{vk}.
\end{equation*}
Applying Theorem \eqref{mt}, we define
\begin{equation*}
\Phi_{m}(u)=\begin{cases}
vu-\ell\left(u^2/2+\li_{2}(e^{-u})\right) &m=-1 \\
\left(\sum\limits_{\nu=1}^{s}\left(b_{\nu}-\frac{1}{2}\right)-\sum\limits_{\mu=1}^{r}\left(a_{\mu}-\frac{1}{2}\right)\right)\li_{1}(e^{- u})+\frac{s-r}{2}u &m=0.
\end{cases}
\end{equation*}
Then
\[\Phi_{-1}'(u)=v-\ell\log(e^{u}-1),\]
then $\lim_{u\rrw 0^+}\Phi_{-1}'(u)=+\infty$ and hence $_{r}\phi_s$ satisfy the assumption of Theorem \ref{mt}. Therefore we obtain that for each $p\ge 0$, as $t\rrw 0^+$,
\begin{equation*}
_{r}\phi_{s}({\bf a,b};t,v)=(1+o(t^p))\int\limits_{0}^{\infty}\frac{(e^{-ta_{1}},\dotsc,e^{-ta_{r}};e^{-t})_{x}}{(e^{-tb_{1}},\dotsc,e^{-tb_{s}};e^{-t})_{x}}e^{-\ell t\binom{x}{2}}e^{vx}\,dx.
\end{equation*}
Furthermore, $u=\log(1+e^{v/\ell})$ is only one solution of $\Phi_{-1}'(u)=0$ and
\[\Phi_{-1}''(\log(1+e^{v/\ell}))=-\ell(1+e^{-v/\ell})<0.\]
Thus, by Theorem \ref{mc} we have
\[_{r}\phi_{s}({\bf a,b};t,v)\sim C_{\phi}t^{B_{\phi}}\exp\left(\frac{A_{\phi}}{t}\right)\left(1+\sum_{j=1}^{\infty}C_{j}t^{j}\right)\]
with
\[A_{\phi}=\frac{\ell}{2}\left( \frac{2v}{\ell}\log(1+e^{v/\ell})-\log^2(1+e^{v/\ell})+\frac{\pi^2}{3}-2\li_2\left(\frac{1}{1+e^{v/\ell}}\right)\right),\]
\[B_{\phi}=\sum_{\nu=1}^{s}b_{\nu}-\sum_{\mu=1}^{r}a_{\mu}-\frac{\ell+1}{2},\]
\[
C_{\phi}=\frac{\left({2\pi}\right)^{\frac{1-\ell}{2}}\left(\frac{1+e^{v/\ell}}{1+e^{-v/\ell}}\right)^{\ell/2}}{\sqrt{\ell}}
\frac{\prod_{\nu=1}^{s}\Gamma(b_{\nu})}{\prod_{\mu=1}^{r}\Gamma(a_{\mu})}\left(1+e^{-v/\ell}\right)^{\sum_{\nu=1}^{s}b_{\nu}-\sum_{\mu=1}^{r}a_{\mu}-\frac{1}{2}}.
\]
and for $j\in\nb_1$, the coefficients $C_{j}\in\rb$ are constant depends only on $_{r}\phi_{s}({\bf a,b};\cdot,v)$. In particular,
\[
_{r}\phi_{s}({\bf a,b};t,0)\sim \frac{1}{\pi^{\ell/2}}\sqrt{\frac{2\pi}{\ell}}\frac{\prod_{\nu=1}^s\Gamma(b_{\nu})}{\prod_{\mu=1}^{r}\Gamma(a_{\mu})}(2t)^{\sum_{\nu=1}^sb_{\nu}-\sum_{\mu=1}^ra_{\mu}-\frac{\ell+1}{2}}\exp\left(\frac{\ell\pi^2}{12 t}\right).
\]

\subsection{Asymtotics of some simple Eulerian Series}
\subsubsection{A simple Eulerian Series}
Let $A, C,D,E>0$, $F\ge 0$, $G\in\nb_1$ and $B\in\rb$. We consider the following Eulerian series.
\begin{equation*}
\rc(q)=\sum_{m\in\nb}\frac{q^{Am^2+Bm}}{(q^C;q^D)_{Em+F}^G}=\frac{1}{(q^C;q^D)_{\infty}^G}\sum_{m\in\nb}q^{Am^2+Bm}(q^{DEm+C+DF};q^D)_{\infty}^G.
\end{equation*}
By \eqref{Hm2}, Lemma \ref{fasy}, Theorem \ref{mt} and Theorem \ref{mc}, we define
\[
R_{-1}(u)=-Au^2-\frac{G}{D}\li_{2}(e^{-DE u})\]
and
\[
R_0(u)=-Bu-G\li_{1}(e^{-DE u})\left(\frac{1}{2}-F-\frac{C}{D}\right)\]
to replace $H_{-1}(u)$ and $H_{0}(u)$, respectively. Then we have
\[R_{-1}'(u)=-2Au-GE\log(1-e^{-DEu})\]
and
\[ R_{-1}''(u)=-2A-\frac{DGE^2}{e^{DEu}-1}.\]
Thus $\lim_{u\rrw 0^+}R_{-1}'(u)=+\infty$, Therefore for each $p\ge 0$, as $q\rrw 1^-$,
\[
\rc(q)=\frac{1+o(|\log q|^p)}{(q^C;q^D)_{\infty}^G}\int\limits_{0}^{\infty}q^{Ax^2+Bx}(q^{DEx+C+DF};q^D)_{\infty}^G\,dx.
\]
Furthermore, $R_{-1}'(u)=0$ with $u>0$ equivalent to
\[
(e^{-u})^{A/(EG)}+(e^{-u})^{DE}-1=0.
\]
This equation just have one solution $\zeta_R$ on $(0,\infty)$ and $R_{-1}''(\zeta_R)<0$. Thus by Theorem \ref{mc} it is easy to prove that the leading asymptotics
\begin{equation*}
\rc(e^{-t})\sim C_{\rc}t^{B_{\rc}}\exp\left(\frac{A_{\rc}}{t}\right),
\end{equation*}
where
\[A_{\rc}=-A\zeta_R^2+\frac{G}{D}\left(\frac{\pi^2}{6}-\li_2(e^{-DE \zeta_R})\right),~B_{\rc}=\frac{CG}{D}-\frac{G+1}{2}\]
and
\[C_{\rc}=\left(\frac{1}{2\pi}\right)^{\frac{G-1}{2}}\Gamma\left(\frac{C}{D}\right)^{G}\frac{\exp\left(\left(\frac{A(F+C/D-1)}{E}-B\right)\zeta_R\right)}{\sqrt{2A+{DGE^2}e^{\left(DE-{2A}/({EG})\right)\zeta_R}}}.\]
\cite{MR2290758}
\subsubsection{Some examples on mock theta functions} We now apply our main result to some mock theta functions.  It is easy check that
there are about a half mock theta functions of the website \cite{MMF} can directly use Theorem \ref{mt} and Theorem \ref{mc} to obtain the complete asymptotic expansion.
Here just gives the illustration of the following two examples.
\begin{example} The following example is a mock theta function of Ramanujan, which were proved in Andrews \cite{MR814916} and Hickerson\cite{MR969247}.
\begin{align*}
F_0(q)&=\sum_{m\in\nb}\frac{q^{m^2}}{(q^{m+1};q)_m}\\
&=\sum_{m\in\nb}\frac{(q^{2m+1};q)_{\infty}}{(q^{m+1};q)_{\infty}}q^{m^2}.
\end{align*}
By \eqref{Hm1}, Lemma \ref{fasy}, Theorem \ref{mt} and Theorem \ref{mc}, we define
\[F(u)=-u^2-\li_2(e^{-2u})+\li_2(e^{-u}) \]
and
\[ F_{00}(u)=-\frac{1}{2}\log(1+e^{-u})\]
to replace $H_{-1}(u)$ and $H_{0}(u)$, respectively. Then we have
\[F'(u)=-2u-\log(1-e^{-u})-2\log(1+e^{-u})\]
and
\[F''(u)=1-\frac{1}{1-e^{-u}}-\frac{2}{1+e^{-u}}.\]
Then $\lim_{u\rrw 0^+}F_{-1}'(u)=+\infty$, thus as $q\rrw 1^-$
\begin{equation*}
F_0(q)=\left(1+o(|\log q|^p)\right)\int\limits_{0}^{\infty}\frac{(q^{2x+1};q)_{\infty}}{(q^{x+1};q)_{\infty}}q^{x^2}\,dx
\end{equation*}
holds for each $p\ge 0$. Moreover, $F_{-1}'(u)=0$ with $u>0$ equivalent to
\[(e^{-u})^3+2(e^{-u})^2-e^{-u}-1=0.\]
We have
\[\zeta_F:=-\log\left(\frac{2}{3} \sqrt{7} \cos \left(\frac{1}{3} \cos ^{-1}\left(-\frac{1}{2 \sqrt{7}}\right)\right)-\frac{2}{3}\right)=0.2206\dots\]
is the solution and $F''(\zeta_F)<0$. Thus we have the leading asymptotics
\begin{align*}
F_{0}(e^{-t})&\sim e^{F_{00}(\zeta_F)}\sqrt{\frac{-2}{F^{''}(\zeta_F)}}\Gamma\left(\frac{1}{2}\right)\sqrt{\frac{1}{t}}e^{F(\zeta_F)/t}\\
&=\left(\frac{1-e^{-\zeta_F}}{2-e^{-\zeta_F}+e^{-2\zeta_E}}\right)^{\frac{1}{2}}\sqrt{\frac{2\pi}{t}}\exp\left(\frac{1}{t}\left(\li_2(e^{-\zeta_F})-\zeta_F^2-\li_2(e^{-2\zeta_F})\right)\right).
\end{align*}
\end{example}
\begin{example} The following mock theta function were proved in Berndt and Chan \cite{MR2351377}.
\begin{align*}
\phi_{-}(q)&=\sum_{m\in\nb_1}\frac{q^m(-q;q)_{2m-1}}{(q;q^2)_m}\\
&=\frac{(-q;q)_{\infty}}{(q;q^2)_{\infty}}\sum_{m\in\nb_1}\frac{(q^{2m+1};q^2)_{\infty}}{(-q^{2m};q)_{\infty}}q^m.
\end{align*}
Clearly,
\[
\sum_{m\in\nb_1}\frac{(q^{2m+1};q^2)_{\infty}}{(-q^{2m};q)_{\infty}}q^m=q\sum_{m\in\nb}\frac{(q^{2m+3};q^2)_{\infty}(q^{2m+2};q)_{\infty}}{(q^{4m+4};q^2)_{\infty}}q^m
\]
By \eqref{Hm1} and Theorem \ref{mc}, we define
\[P(u)=(\li_2(e^{-4u})-3\li_2(e^{-2u}))/2 \]
to replace $H_{-1}(u)$. Then
\[\lim_{u\rrw 0^+}P_{-1}'(u)=\lim_{u\rrw 0^+}\left(-\log(1-e^{-2u})+2\log(1+e^{2u})\right)=+\infty,\]
thus Theorem \ref{mt} implies that for each $p\ge 0$, as $q\rrw 1^-$
\[
\phi_{-}(q)=\left(1+o(|\log q|^p)\right)q\frac{(-q;q)_{\infty}}{(q;q^2)_{\infty}}\int\limits_{0}^{\infty}\frac{(q^{2x+3};q^2)_{\infty}(q^{2x+2};q)_{\infty}}{(q^{4x+4};q^2)_{\infty}}q^x\,dx.
\]
Furthermore, it is not difficult to prove that the above
\[
\phi_{-}(q)=\left(1+o(|\log q|^p)\right)\frac{(-q;q)_{\infty}}{(q;q^2)_{\infty}}\int\limits_{0}^{1}\frac{(qy;q^2)_{\infty}}{(-y;q)_{\infty}}\frac{\,dy}{2\sqrt{y}}
\]
holds for each $p\ge 0$ as $q\rrw 1^-$. Moreover, from Theorem \ref{mc} it is easy to obtain the leading asymptotics :
\[
\phi_{-}(e^{-t})(t)\sim \frac{\exp\left(\pi^2/(6t)\right)}{2\sqrt{3\pi t}}.
\]
\end{example}

\section*{Acknowledgment}
The author would like to thank his advisor Zhi-Guo Liu for consistent encouragement and useful suggestions. The author would also like to thank Ruiming Zhang for useful comments and  suggestions.

\begin{thebibliography}{10}

\bibitem{MR2425181}
Kathrin Bringmann, Ken Ono, and Robert~C. Rhoades.
\newblock Eulerian series as modular forms.
\newblock {\em J. Amer. Math. Soc.}, 21(4):1085--1104, 2008.

\bibitem{MR1573993}
G.~N. Watson.
\newblock The {F}inal {P}roblem : {A}n {A}ccount of the {M}ock {T}heta
  {F}unctions.
\newblock {\em J. London Math. Soc.}, S1-11(1):55.

\bibitem{MR1310726}
Richard~J. McIntosh.
\newblock Some asymptotic formulae for {$q$}-hypergeometric series.
\newblock {\em J. London Math. Soc. (2)}, 51(1):120--136, 1995.

\bibitem{MR1618298}
Richard~J. McIntosh.
\newblock Asymptotic transformations of {$q$}-series.
\newblock {\em Canad. J. Math.}, 50(2):412--425, 1998.

\bibitem{MR2290758}
Don Zagier.
\newblock The dilogarithm function.
\newblock In {\em Frontiers in number theory, physics, and geometry. {II}},
  pages 3--65. Springer, Berlin, 2007.

\bibitem{MR2864462}
Masha Vlasenko and Sander Zwegers.
\newblock Nahm's conjecture: asymptotic computations and counterexamples.
\newblock {\em Commun. Number Theory Phys.}, 5(3):617--642, 2011.

\bibitem{MR0001296}
E.~M. Wright.
\newblock The asymptotic expansion of integral functions defined by {T}aylor
  series.
\newblock {\em Philos. Trans. Roy. Soc. London, Ser. A.}, 238:423--451, 1940.

\bibitem{MR0003876}
E.~M. Wright.
\newblock The asymptotic expansion of the generalized hypergeometric function.
\newblock {\em Proc. London Math. Soc. (2)}, 46:389--408, 1940.

\bibitem{MR2382736}
Ruiming Zhang.
\newblock Plancherel-{R}otach asymptotics for certain basic hypergeometric
  series.
\newblock {\em Adv. Math.}, 217(4):1588--1613, 2008.

\bibitem{MR2845514}
Ruiming Zhang.
\newblock Scaled asymptotics for some {$q$}-functions.
\newblock {\em Adv. in Appl. Math.}, 48(1):181--193, 2012.

\bibitem{MR1703273}
Richard~J. McIntosh.
\newblock Some asymptotic formulae for {$q$}-shifted factorials.
\newblock {\em Ramanujan J.}, 3(2):205--214, 1999.

\bibitem{MR1783627}
Basil Gordon and Richard~J. McIntosh.
\newblock Some eighth order mock theta functions.
\newblock {\em J. London Math. Soc. (2)}, 62(2):321--335, 2000.

\bibitem{MR3213313}
Kathrin Bringmann and Karl Mahlburg.
\newblock Asymptotic formulas for stacks and unimodal sequences.
\newblock {\em J. Combin. Theory Ser. A}, 126:194--215, 2014.

\bibitem{MR0005522}
A.~E. Ingham.
\newblock A {T}auberian theorem for partitions.
\newblock {\em Ann. of Math. (2)}, 42:1075--1090, 1941.

\bibitem{MR1688958}
George~E. Andrews, Richard Askey, and Ranjan Roy.
\newblock {\em Special functions}, volume~71 of {\em Encyclopedia of
  Mathematics and its Applications}.
\newblock Cambridge University Press, Cambridge, 1999.

\bibitem{MR3128410}
Ruiming Zhang.
\newblock On asymptotics of the {$q$}-exponential and {$q$}-gamma functions.
\newblock {\em J. Math. Anal. Appl.}, 411(2):522--529, 2014.

\bibitem{MMF}
Wikipedia.
\newblock Mock modular form --- wikipedia{,} the free encyclopedia, 2017.

\bibitem{MR814916}
George~E. Andrews.
\newblock The fifth and seventh order mock theta functions.
\newblock {\em Trans. Amer. Math. Soc.}, 293(1):113--134, 1986.

\bibitem{MR969247}
Dean Hickerson.
\newblock On the seventh order mock theta functions.
\newblock {\em Invent. Math.}, 94(3):661--677, 1988.

\bibitem{MR2351377}
Bruce~C. Berndt and Song~Heng Chan.
\newblock Sixth order mock theta functions.
\newblock {\em Adv. Math.}, 216(2):771--786, 2007.

\end{thebibliography}

\end{document}